\newtheorem{thm}{Theorem} 
\newtheorem{prop}[thm]{Proposition} 
\newcommand{\field}[1]{ \ensuremath{\mathbb{#1}}}
\newcommand{\C}{ \ensuremath{\field{C}}}
\newcommand{\R}{ \ensuremath{\field{R}}}
\newcommand{\Q}{ \ensuremath{\field{Q}}}
\newcommand{\N}{ \ensuremath{\field{N}}}
\newcommand{\Z}{ \ensuremath{\field{Z}}}
\newcommand{\T}{ \ensuremath{\field{T}}}
\renewcommand{\phi}{\varphi}
\renewcommand{\epsilon}{\varepsilon}
\newcommand{\norm}[1]{ \ensuremath{\left\lVert{#1}\right\rVert}}
\newcommand{\floor}[1]{\ensuremath{\left\lfloor{#1}\right\rfloor}}
\newcommand{\abs}[1]{ \ensuremath{\left\lvert{#1}\right\rvert}}
\newcommand{\remark}{{\textbf{Remark. }}}
\DeclareMathOperator{\Vol}{Vol}
\begin{document} 
\title{Geometry of the ergodic quotient reveals coherent structures in flows}

\author[ucsb]{Marko Budi\v{s}i\'c\corref{cor1}}
\ead{mbudisic@engr.ucsb.edu}
\author[ucsb]{Igor Mezi\'c}
\ead{mezic@engr.ucsb.edu}

\cortext[cor1]{Corresponding author}
\address[ucsb]{Department of Mechanical Engineering and Center for Control, Dynamical Systems and Computation, \\University of California, Santa Barbara, USA}

\date{\today}
\begin{abstract}
  Dynamical systems that exhibit diverse behaviors can rarely be completely understood using a single approach. However, by identifying coherent structures in their state spaces, i.e., regions of uniform and simpler behavior, we could hope to study each of the structures separately and then form the understanding of the system as a whole. The method we present in this paper uses trajectory averages of scalar functions on the state space to: \begin{inparaenum}[\upshape(\itshape a\upshape)]
\item identify invariant sets in the state space,
\item form coherent structures by aggregating invariant sets that are similar across multiple spatial scales.
\end{inparaenum}
First, we construct the ergodic quotient, the object obtained by mapping trajectories to the space of trajectory averages of a function basis on the state space. Second, we endow the ergodic quotient with a metric structure that successfully captures how similar the invariant sets are in the state space. Finally, we parametrize the ergodic quotient using intrinsic diffusion modes on it. By segmenting the ergodic quotient based on the diffusion modes, we extract coherent features in the state space of the dynamical system. The algorithm is validated by analyzing the Arnold-Beltrami-Childress flow, which was the test-bed for alternative approaches: the Ulam's approximation of the transfer operator and the computation of Lagrangian Coherent Structures. Furthermore, we explain how the method extends the  Poincar\'e map analysis for periodic flows. As a demonstration, we apply the method to a periodically-driven three-dimensional Hill's vortex flow, discovering unknown coherent structures in its state space. In the end, we discuss differences between the ergodic quotient and alternatives, propose a generalization to analysis of (quasi-)periodic structures, and lay out future research directions.
\end{abstract}
\begin{keyword}
  coherent structures; dynamical systems; ergodic partition; diffusion
  modes; trajectory averages
\end{keyword}

\maketitle

\section{Introduction}
\label{sec:intro}
Historically, a lot of work in dynamical systems focused on understanding a single type of behavior in a model: elliptical zones, hyperbolic trajectories, chaotic attractors, mixing regions, etc. As computers become more powerful, and experimental methods, such as Particle Image/Tracking Velocimetry, more precise, we are able to study large, complicated systems, whose state spaces comprise many different coexisting coherent structures: regions of uniform and simpler behavior, which might be well understood individually. To apply our understanding to the entire system, however, we first need to identify where coherent structures lie in the state space. This task can be made difficult by the quantity of coherent structures, fractal boundaries between them, or the high dimension of the state space.

Over the past decade, several different interpretations of coherent structures have appeared, each serving as a foundation for algorithms which extract them from the flow or from trajectory data. Coherent structures have been identified as:
\begin{inparaenum}[\upshape(\itshape i\upshape)]
\item distinguished trajectories that organize behavior of the flow nearby,
\item regions of state space between barriers of dynamical transport,
\item sets of initial conditions that are not dispersed by the flow,
\item sets of trajectories on which all flow\hyp{}invariant functions take constant values.
\end{inparaenum}
All the mentioned definitions identify sets that are dynamically invariant, in some sense, however, the choice of atomic objects, i.e., barriers of transport vs.\ initial conditions vs.\ trajectories, influences the type of approximation we obtain when using algorithms finite in time, space, and precision.

Identification of coherent structures based on barriers to transport includes both classical geometric studies of invariant manifolds and newer approaches that attempt to generalize invariant manifolds to aperiodic, transient flows. The study of Lagrangian Coherent Structures (LCS) \cite{Haller:2000us,Haller:2011kr} focuses on structures that take the roles of hyperbolic invariant manifolds in time-varying flows. As a consequence, they are dominantly studied through finite-time Lyapunov exponent fields whose local extrema are tracked to obtain proxies to LCS \cite{Haller:2002bf,Shadden:2005vn,Joseph:2002vi}. 

While LCS-based methods identify distinguished hyperbolic trajectories as organizing structures, a recent paper \cite{Mezic:2010kh}, coauthored by the second author, proposed to identify both hyperbolic and elliptic behavior in a finite-time, transient context. The approach, termed mesochronic velocity analysis, studies the Jacobian of the flow averaged over short segments of trajectories. 

Both LCS and mesochronic velocity fields identify distinguished trajectories that organize surrounding behavior. ``Surrounding'' is usually interpreted in terms of regions between neighboring distinguished trajectories. Such an interpretation is difficult to generalize to state spaces of higher dimension, where trajectories are not of codimension-one and cannot be used to segment the state space, without additional assumptions, e.g., symmetries of solutions.

Approaches that aggregate smaller flow features into larger coherent structures are the alternative to analyzing barriers that separate trajectories. A prominent aggregation method is the computation of invariant sets through eigenfunctions of the transfer (Perron-Frobenius) operator, a linear operator that captures evolutions of densities of points carried by dynamics \cite{Froyland:2003jj, Froyland:2009ti,Froyland:2010jo, Froyland:2011tt}. Numerically, the approach most commonly proceeds by the Ulam's method: approximation of the system by a stochastic Markov chain, acting on a discretized state space. The eigenvectors of the Markov chain transition matrix approximate the eigenfunctions  of the transfer operator. The Ulam's method requires a computation of short trajectories started from a large number of initial conditions within each discretization cell. Despite its prominence, Ulam's method can be difficult to apply to, e.g., high\hyp{}dimensional systems, and some experimental setups. A large number of state space dimensions may render computations infeasible, since the number of elements in Ulam's matrix grows exponentially with the dimension of the state space. On the other hand, if resetting and precise preparation of initial conditions is not possible, e.g., in certain fluids experiments, it can be more feasible to run fewer experiments for longer periods of time. 

The method that does use longtrajectories as a direct input has been pursued in \cite{Mezic:1999fu,Mezic:2004is}. It groups trajectories based on values of invariant functions along them. This approach interprets invariant functions as eigenfunctions of the Koopman operator, which captures how values of functions evolve along trajectories of the system. Koopman and the aforementioned Perron-Frobenious operator form a dual pair in the appropriate function spaces. Knowing eigenfunctions of the Koopman operator allows us to detect invariant sets in the state space, as level sets of Koopman eigenfunctions divide the state space into invariant partitions. It is possible to compute them without approximating the operator itself: starting from any observable, i.e., a scalar function on the state space, and averaging its values along trajectories projects the observable to the invariant eigenspace of the Koopman operator. Choosing observables from a function basis, averaging them, and forming joint level sets of averaged observables results in finer and finer stationary partitions of the state space. Ultimately, the process converges to the ergodic partition of the space: the unique, finest partition into invariant stationary sets. It was noted in \cite{Levnajic:2010gq} that even a small number of observables can be used to approximate the ergodic partition well. It remained unclear, however, how to select such a small set of observables that captures a lot of detail except by trial and error.

The algorithm proposed in this paper follows the analysis of dynamics using averaged observables. It does not aim to ``guess'' an observable whose time average reveals the most information about the coherent structure; rather, it \emph{constructs} a suitable set of invariant functions from averages of a function basis. To explain the method, we expand on the representation of dynamics using the ergodic quotient \cite{Levnajic:2010gq}: each trajectory is represented by a sequence of time\hyp{}averaged observables taken from a Fourier function basis. Collection of all such sequences, the ergodic quotient, is equivalent to the ergodic partition, but more practical to work with, as analytic tools on sequence spaces are very well studied. To analyze the ergodic quotient, we endow it with a distance function, adapted from \cite{Mathew:2011ev}, that identifies two trajectories as coherent if they spend similar fractions of their evolutions within any particular set in the state space. As a result, the problem of dynamical analysis of flows is converted into a geometric analysis of the ergodic quotient, with a metric structure given by the empirical distance. Our approach is similar in spirit to analysis of integrable systems through geometry of their integrals of motion \cite{Fomenko:2004}. 

To analyze geometry of the ergodic set we used the Diffusion Maps algorithm \cite{Coifman:2006cy}, which retrieves dominant modes of diffusion on objects with metric structure, e.g., branched manifolds. We introduced the use of Diffusion Maps as a tool in \cite{Budisic:2009iy}; here we explain its results in terms of the ergodic quotient. To validate the algorithm, we apply it to the Arnold-Beltrami-Childress (ABC) flow, which was previously used as a test-bed for algorithms based on the Ulam's method \cite{Froyland:2009ti} and on LCS \cite{Haller:2001vh}. The second example, the periodically-forced Hill's vortex flow, demonstrates the application of the algorithm to time-dependent flows.

When applied to analysis of time-periodic dynamical systems, our method extends the familiar analysis of the Poincar\'e map, including the averaged information about trajectories between two piercings of the Poincar\'e surface. Building on the application to time-dependent systems, we explain how the algorithm could be easily extended to the analysis of invariant features of periodic and quasi-periodic sets in the state space \cite{Mezic:2004is}. In this case, instead of ergodic averages along trajectories, we use harmonic averages along trajectories, which can also be interpreted through eigenspaces of the Koopman operator. While the ergodic averaging projected functions onto invariant eigenspace of the Koopman operator, the harmonic averaging projects functions to eigenspaces associated to other eigenvalues.

There are several benefits to the analysis based on averaging observables along trajectories. Since we use the entire trajectory evolution, we do not need to access the entire state space directly, e.g., we can place initial conditions on a lower\hyp{}dimensional surface and still obtain the full information about coherent structures intersected by that surface. Furthermore, compared to Ulam's method, we require fewer initial conditions for analysis. These features makes the method applicable to experimental setups where resetting and initialization of experiments are restricted or costly. Moreover, when there are low-dimensional surfaces of interest, such restricted initialization of trajectories makes it possible to analyze even high\hyp{}dimensional systems without the exponential growth in number of initial conditions. Finally, approximation of eigenfunctions of the Koopman operator does not require passing through a stochastic approximation of the deterministic dynamical system; instead projections of observables to eigenfunctions are evaluated directly by averaging.

Although both of our examples in this paper originated as models of fluid flows, the ergodic quotient analysis can be applied to a broad range of dynamical systems on compact state spaces. The incompressible fluid flows and hamiltonian systems, however, are two classes of systems that commonly have many invariant sets in their state spaces, arranged in intricate structure, therefore, those classes provide the most natural targets for our analysis. Nevertheless, dissipative systems can also be studied using the ergodic quotient, in which case the method identifies how basins of attraction are arranged in state spaces.

The paper is structured as follows: the Section \ref{sec:theory} presents the theory of the ergodic quotient, the empirical distance, and the diffusion coordinates, followed by the application to the autonomous ABC flow.  Section \ref{sec:time-varying} demonstrates how application to non-autonomous, periodically forced flows improves on the classical analysis of the Poincar\`e map and supports the case by a numerical analysis of the periodically-forced 3D Hill's vortex flow. In Section \ref{sec:discussion} we discuss how our method compares with alternatives, as well as present our suggestions for application and future research. The Appendices analyze numerical properties of the averaging algorithm and the empirical distance, and present details about the Diffusion Maps that are necessary for an implementation as a computer code.

\section{The Ergodic Quotient}
\label{sec:theory}

Geometry of the ergodic quotient enables comparison of trajectories of dynamical systems across multiple spatial scales. To be able to study it, we move through three different spaces: the state space, the space of averaged observables, where the ergodic quotient is constructed, and the space of diffusion coordinates, where the ergodic quotient is described by independent, intrinsic coordinates. Before delving deeper, we give an overview of the process, with italicized terms defined later in the text.

\begin{figure}[htpb]
  \centering
  \includegraphics[width=80mm]{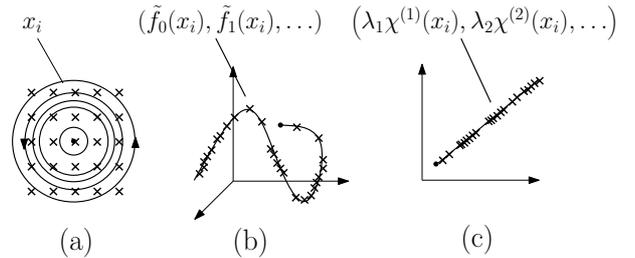}
  \caption{A sketch of (a) the state space portrait, (b) the ergodic quotient in the space of averaged observables, and (c) the ergodic quotient in the space of diffusion coordinates. Crosses depict initial conditions used for numerical computation. Note that crosses on the ergodic quotient represent entire trajectories started from associated initial conditions on the state space.}\label{fig:sampling}
\end{figure}

To measure how similar two trajectories are in the state space (Figure \ref{fig:sampling}.a), we use the \emph{empirical distance}, which compares trajectories based on their residence times in spherical sets in the state space. On the state space, however, the empirical distance is not practically computable, so we map trajectories to a space where it is. By selecting observables $f_k$ from a Fourier function basis and averaging them along trajectories, each trajectory gets mapped to a point on the \emph{ergodic quotient}, which comprises sequences of \emph{trajectory averages}\footnote{Terms \emph{time average} and \emph{trajectory average} are used interchangeably. In literature, \emph{ergodic average} and \emph{C\'esaro average} are also used.} $\tilde f_k$ for every trajectory in the state space (Figure \ref{fig:sampling}.b). We endow the sequence space with a metric induced by a $H^{-s}$ Sobolev space norm. The $H^{-s}$ metric is topologically equivalent to the empirical distance, yet it has a benefit that it is easily computable as a weighted euclidean metric. The ergodic quotient is typically low\hyp{}dimensional, therefore its representation in a (infinitely-dimensional) sequence space is not economical. To address this issue, we use a nonlinear change of coordinates to represent the quotient in \emph{diffusion coordinates}, preserving the intrinsic geometry of the quotient. Diffusion coordinates are an orthogonal, efficient, scale-ordered, low\hyp{}dimensional representation of the ergodic quotient that has an effect of straightening the ergodic quotient (Figure \ref{fig:sampling}.c). The euclidean distance in diffusion coordinates represents the  \emph{diffusion distance} along the ergodic quotient. Diffusion distance is an intrinsic distance, locally consistent with empirical distance; for the sake of introduction, it can be thought of as a robust version of the geodesic distance along the quotient.

Computationally, the construction starts with a finite set of trajectories $x_i$ on the state space. They are mapped to the ergodic quotient by averaging a finite subset of the Fourier basis along them, therefore, the points on the numerical ergodic quotient are vectors of trajectory averages. Based on trajectory averages, we compute the matrix of pairwise $H^{-s}$ distances between trajectories, which forms the input data to the Diffusion Maps algorithm. It computes the diffusion coordinates of each trajectory by solving a matrix eigenproblem. Starting trajectories are then grouped into coherent structures, using a $k$-means algorithm applied to the diffusion coordinates of trajectories. To visualize the coherent structures in the state portrait, we either color trajectories based on their $k$-cluster membership, or we color them based on a single diffusion coordinate.

\subsection{Construction: From state space portrait to the ergodic quotient}
\label{sec:empmeas}

Take a compact euclidean manifold $\mathcal M \in \R^D$ as the state space of dynamics. The flow map is a composition (semi)-group of continuous functions $\phi_t(x) : \mathcal M \to \mathcal M$, with time $t \in \mathcal T$. For ODE-generated flows, the time-like parameter is a real number $\mathcal T = \R^+_0$, and the semigroup is assumed to be continuous in $t$. In map-generated flows, time progression over $\mathcal T = \Z^+_0$ is interpreted as the iterated application of the map $\phi(x)$, i.e., $\phi_t(x) = \phi[ \phi_{t-1}(x)]$, with $\phi_0(x) = x$. The space of observables $\mathcal F$ is a set of Borel-measurable functions $f : \mathcal M \to \C$; in this paper we will take $\mathcal F$ as the set of continuous functions on $\mathcal M$.

The geometric analysis of dynamics studies the state space portrait of the system, i.e., the collection of all trajectories in the state space, $\left\{ \left(\phi_t(x)\right)_{t\in \R} : \forall x \in \mathcal M \right\}$. Chaotic behavior, however, implies that individual trajectories are non-robust to errors in initial conditions $x$. Despite the lack of robustness of trajectories, the averages of observables along them are robust, even in chaotic regimes \cite{Sigurgeirsson:2001wi,ChaosBook}. As our approach is based on averaging, we expect the computations to remain robust even in chaotic regimes, which is the main motivator for this approach. Presented introductory topics follow the exposition given in \cite[][\S 4]{Katok:1995ul}.

A finite-time average of an observable $f$ is given by
\begin{equation*}
\frac{1}{T}\int_0^T \left[f \circ \phi_t \right](x) dt.
\end{equation*}
Let $\Sigma \subset \mathcal M$ be the set of points on which the limit as $T\to\infty$ exists.
On $\Sigma$, we can define the infinite time average
\begin{equation*}
\tilde f(x) \triangleq \lim_{T \to \infty}\frac{1}{T}\int_0^T \left[f \circ \phi_t \right](x) dt.
\end{equation*}
If the flow map preserves a measure $\lambda$, the set $\mathcal M \backslash \Sigma$ will be negligible in measure, $\lambda(\mathcal M \backslash \Sigma) = 0$, by Birkhoff's Ergodic Theorem. As hamiltonian systems and divergence-free flows preserve the Lebesgue measure, it is unlikely that an initial condition for which time averages do not converge will be selected when analyzing such systems. The same argument can be made for systems that preserve a Lebesgue-continuous measure, which contains an even larger class of systems. Consequently, we will equate $\Sigma$ and $\mathcal M$ for purposes of this paper.

The infinite-time average $f \mapsto \tilde f(x)$ is a positive, linear, continuous functional, and the Riesz representation theorem asserts that the averaging functional can be represented by a Borel probability measure $\mu_{x}$, the \emph{empirical measure}, attached to the initial condition $x$:
\begin{align}\label{eq:spacetimeavg}
  \int_\mathcal{M} f d\mu_{x} = \tilde f(x).
\end{align}
The empirical measures of sets $\mu_x(E)$  can be interpreted as the fraction of evolution that the trajectory evolving from $x$ spends inside the set $E$ (residence or mean-sojourn time). For regular orbits, empirical measures are masses supported on fixed points or spread out along periodic orbits, while on chaotic sets they may have a positive-area support. It is worth noting that empirical measures $\mu_x$ are \emph{instantaneous} invariants of the system, despite our use of an asymptotic process in their construction. They play a similar role in the measure-theoretic description of dynamics as the trajectories do in the geometric description. 

Instead of working with potentially singular empirical measures directly, we will work with their weak representatives. Fix a countable basis of continuous functions $f_k$ on $\mathcal M$, indexed by a multi\hyp{}index $k$, e.g., Fourier harmonics with $D$-dimensional wavevectors $k \in \Z^D$. Let the \emph{ergodic quotient map} be $\pi : \mathcal M \to l^\infty(\Z^D)$, given by
\begin{align*}
  \pi(x) := ( \dots, \tilde f_k(x), \dots )_k, \quad k \in \Z^D.
\end{align*}
Action of $\pi$ at $x$ can be interpreted as forming a weak representation of the empirical measure $\mu_x$. The set of all such sequences, generated by averaging from each initial condition, is the \emph{ergodic quotient} $\xi := \pi(\mathcal M)$. It will play the same role that the state-space portrait plays in geometric analysis, with the added benefit of robustness in all dynamical regimes.

Strictly, every point on the ergodic quotient should be thought of as the set of trajectory averages of \emph{all} continuous observables: by selecting a basis we merely pick a representation of the ergodic quotient in the particular sequence space $l^\infty(\Z^D)$. It was shown in \cite{Mezic:2004is} that all such representations are equivalent; therefore, we will speak of any of them as \emph{the} ergodic quotient when there is no ambiguity.

Conceptually, we have moved from points in the state space $x$ to associated averages of observables in two steps:
\begin{align}
  \label{eq:EQ}
  x \xrightarrow{\text{I}} \mu_x \xrightarrow{\text{II}} ( \dots, \tilde f_k(x), \dots )_{k\in\Z^D}.
\end{align}
The representation steps (I) and (II) in \eqref{eq:EQ} are not bijective in general. 

Step (I), at the very least, associates all points on the same trajectory with the same empirical measure; however, the pre\hyp{}images of empirical measures might contain more than a single trajectory. In \cite{Budisic:2009iy} we discussed the approximation of the ergodic partition, which is the (unique) partition of the state space into ergodic sets. An invariant subset of the state space $S \subset \mathcal M$ is called ergodic when the flow $\phi_t$ restricted to it, $\phi_t:S \mapsto S$, is an ergodic system. Since trajectory averages of ergodic systems do not depend on initial conditions, it follows that all points $x \in S$ have the same empirical measure. In this sense, the pre\hyp{}image of the ergodic quotient is the ergodic partition of the state space. Such an outcome is  desirable, as ergodic sets are minimal robust atomic objects that contain (non-robust) trajectories. 

Step (II) is bijective only if the entire function basis can be used to construct the ergodic quotient. On continuous state spaces which have infinite bases, we will need to truncate the function set to implement the algorithm on a computer: the magnitude of error introduced by basis truncation will depend on the distance structure on the ergodic quotient, discussed in the Section \ref{sec:distance}.

\remark Unless we are looking for the ergodic quotient of the entire system, we can place initial conditions only in a region of space that is of a particular interest. Moreover, only one initial condition from an ergodic set is required to represent the empirical measure for the entire ergodic set, as any trajectory in the ergodic set will correctly sample the associated empirical measure. Therefore, it is sufficient to sample the initial conditions from the surface that intersects coherent features we wish to explore. After selecting the initial conditions and the set of observables to average, we compute trajectories starting with each initial condition until time averages along them converge (see \ref{sec:convergence}), thus obtaining the approximate mapping of the state space into the ergodic quotient.

\subsection{Metric structure: Empirical distance}
\label{sec:distance}

Trajectories are aggregated into coherent structures by a criterion of similarity embodied in a distance function between trajectories. The choice of the distance function determines the type of coherent structures we obtain. In this work, we chose to work with the empirical distance, which compares two trajectories based on residence times \cite{Mathew:2011ev}. Instead of working with the empirical distance directly on the state space, we use a distance on the ergodic quotient that is equivalent to it, thus converting the analysis of dynamics into analysis of geometry of the ergodic quotient.

The empirical distance was originally used to quantify how closely a trajectory samples an a priori distribution on the state space, i.e., to compare an empirical measure with a fixed prior measure on the state space. The same distance can be used to compare two empirical measures and estimate how similar the processes that generate measures are. Define the empirical distance $\mathcal D$ between measures $\mu_x$, $\mu_y$ by 
\begin{align*}
  \mathcal{D}(\mu_x,\mu_y)^2 = \int_0^1 \int_{\mathcal{M}} \left\{\mu_x\left[B(p,r)\right] - \mu_y\left[B(p,r)\right]\right\}^2 dp dr,
\end{align*}
where $B(p,r) \subset \mathcal{M}$ are spherical sets of center $p$ and radius $r$ normalized such that $B(p,1) \supset \mathcal{M}$ and $\Vol \mathcal M = 1$. Note that integration is over all spherical sets, without normalization by the sets' volumes, resulting in higher sensitivity to differences across larger spatial scales.

Integration over uncountably many spherical sets is practically infeasible, however, the metric induced by the norm on a negative-order Sobolev space of measures can be used instead. The distance
\begin{align}
  \begin{aligned}
    \mathcal{D}_{-s}(\mu_x,\mu_y) &\triangleq \norm{\tilde F(x) - \tilde F(y)}_{2,-s}\\
    \norm{\tilde F(x) - \tilde F(y)}_{2,-s}^2 &= \sum_{k \in \Z^D}
    \frac{ \abs{\tilde f_k(x) - \tilde f_k(y)}^2 }{ \left[ 1 +
        \left(2\pi \norm{k}_2 \right)^2\right]^s },
  \end{aligned}\label{eq:negsobdist}
\end{align}
is easily computed as a weighted sum of Fourier coefficients, $\tilde F(x) \triangleq \left( \dots, \tilde f_k(x), \dots \right)_{k \in \Z^D}$ of measures involved. The order $s$ of the Sobolev space is selected based on the dimension $D$ of the domain of measures $\mathcal M$  as $s = (D+1)/2$. Using $\mathcal D_{-s}$ instead of $\mathcal D$ does not change the resulting topology, as the empirical metric and the $H^{-s}$ with the order $s = (D+1)/2$ are equivalent, i.e., there exist positive constants $c$ and $C$ such that for all $\mu_x$ and $\mu_y$ it holds that
\[
c \mathcal D(\mu_x,\mu_y) \leq \mathcal D_{-s}(\mu_x,\mu_y) \leq C \mathcal D(\mu_x,\mu_y).
\]

When $\mu_x$ and $\mu_y$ are empirical measures, the Fourier coefficients can be computed as trajectory averages of observables, due to  \eqref{eq:spacetimeavg}, which is the core observation on which we base our method. Therefore, we choose observables  $f_k$ as Fourier basis functions, 
\begin{equation}
  \label{eq:fourierfunction}
  f_k(x) = (2\pi)^{-D/2} \exp\left[ i 2\pi \sum_{d=1}^D k_d x_d\right],
\end{equation}
with $k_d$, and $x_d$ being components of the wave- and state-vectors, respectively, assuming $\mathcal M = \T^D$ for simplicity \footnote{For arbitrary domains, observables are solutions of the Helmholtz problem on the domain. G.\ Mathew and I.\ Mezi\'c will address the general formula for the $H^{-s}$ distance in their upcoming work.}. Interpreted in the ergodic quotient framework, sequences $\tilde f_k(x)$ and $\tilde f_k(y)$ are points on the ergodic quotient corresponding to empirical measures $\mu_x$ and $\mu_y$.
 
For purposes of our method, it is important to capture the topology and the geometry of the ergodic quotient, not the values of involved quantities. Therefore, we will use the empirical distance $\mathcal D$ and the $H^{-s}$ Sobolev space distance $\mathcal D_{-s}$ interchangeably: the former in explaining conceptual implications of the metric structure, the latter in computational considerations.

In practice, we cannot compute averages of the entire function basis. The direct effect of the basis truncation is difficult to quantify; intuitively, the higher the cutoff $K$ is set, restricting wavevectors to $k \in [-K,K]^{D}$, the finer are the state space features we are able to resolve through empirical distance. In \ref{sec:finitewavevector} we show that truncation introduces at most $\mathcal O(1/\sqrt{K})$ error in the distance for any state space dimension $D$.

\remark From a signal processing perspective, the $H^{-s}$ distances correspond to a low\hyp{}pass filtering of empirical distributions prior to their comparison. Since numerical errors are more likely to manifest in averaging of higher-wavenumber modes, we expect that the use of such distances will improve the numerical stability of the analysis. For a brief analysis of these effects, see \ref{sec:convergence}.

\subsection{Geometry of the ergodic quotient}
\label{sec:geometry}

Equipping the ergodic quotient with a distance function provides a setting for studying the geometry of the space of invariant functions. The study of dynamics through ergodic quotient shows similarities with the study of dynamics of integrable systems through geometry of integrals of motion. The main difference is that the ergodic quotient is a constructive technique: we do not need to know the expressions for integrals of motion to construct the ergodic quotient or approximate it numerically. Moreover, construction of the ergodic quotient is possible even when the system is not integrable.

\begin{figure}[htpb]
  \centering
  \includegraphics[width=80mm]{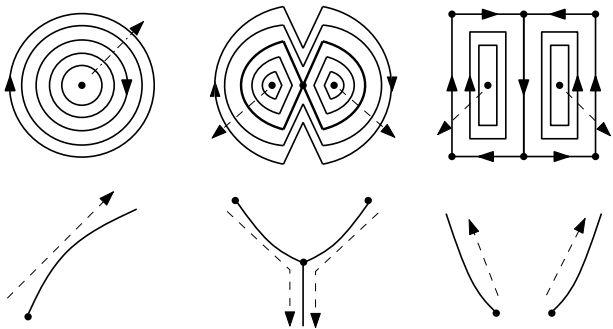}
  \caption{Geometry of the ergodic quotient under empirical distance. Sketches of matching features in state space portraits (top) and ergodic quotients (bottom) are given from left to right for a harmonic oscillator, a Duffing-type oscillator and a double gyre system. Dots indicate equilibria and dashed arrows indicate how moving an initial condition in the state space moves its image on the ergodic quotient.}\label{fig:eqsketch}
\end{figure}

In Figure \ref{fig:eqsketch} we demonstrate what the ergodic quotient might look like locally for several simple state-space portraits. In elliptic regions, invariant functions are a one-parameter family, mapping to a filament in the ergodic quotient. In Duffing-type dynamics, one-parameter invariants exist along libration trajectories in elliptic zones and along the revolution trajectories outside of the separatrix. Moving initial conditions close to separatrix from either side results in trajectories slowing down more and more as they pass the saddle, and consequently time averages of an observable will converge to the value that the observable attains at the saddle. As a result, filaments for librations and revolution trajectories meet in the ergodic quotient. In double-gyre flows, the filaments do not meet, since two adjacent elliptic cells share only two out of four saddles; as trajectories slow down next to all four, the time averages will converge to a weighted average of the function values at the saddles, which will be different for every adjoining elliptic cell.

\remark Fomenko in \cite{Fomenko:2004} used a similar construction, the Reeb graphs, to analyze integrable systems. Reeb graphs are graphical representations of connected components in level sets of Morse functions, in particular, the integrals of motion. For an integrable system, the joint level sets of integrals of motion are ergodic sets. Fomenko did not extend his work to systems for which we do not have expressions for integrals of motion. The ergodic quotient, therefore, can be interpreted as a setting for generalization of Fomenko's approach to non-integrable systems.

\subsection{Diffusion modes as global coordinates}
\label{sec:globalcoordinates}

Although the ergodic quotient is a subset of an infinite-dimensional sequence space, its main features could be approximated well in a lower number of dimensions, as illustrated in Figure \ref{fig:eqsketch}. To obtain a low\hyp{}dimensional representation, we will convert from the ambient coordinates, where axes are given by averaged observables, to intrinsic coordinates, where axes are independent parameters that parametrize the ergodic quotient. As intrinsic coordinates, we chose the modes of diffusion, or heat spread, along the ergodic quotient, which provide an efficient,  orthogonal low\hyp{}dimensional representation. Numerically, we pass from ambient coordinates to diffusion modes using the Diffusion Maps algorithm \cite{Coifman:2006cy}.

Problems of approximating high\hyp{}dimensional data by low\hyp{}dimensional parametrizations are common in data mining and machine learning communities, where a low\hyp{}dimensional process, e.g., an object moving in the field of vision, is measured by high\hyp{}dimensional observation, e.g., image consisting of pixels. The usefulness of diffusion eigenfunctions, as a prominent example of laplacian-based methods, is well documented in such settings, e.g., \cite{Luxburg:2007bb}.

On a differentiable manifold, the diffusion operator is given by $A = e^{-t\Delta}$, where $\Delta$ is the Laplace-Beltrami operator and $t$ the time over which diffusion evolves. Spectrum of the diffusion operator is contained within the unit interval, with first eigenvalue always $\lambda_0 = 1$, and the rest of the eigenvalues ordered in decreasing order. Eigenfunctions of the diffusion operator $\chi^{(k)}$ are orthogonal and normalized to $\norm{\chi^{(k)}}_\infty$ amplitude with the trivial eigenfunction $\chi^{(0)} \equiv 1$. The numerical algorithm, The Diffusion Maps, retrieves the values of eigenfunctions sampled at finite number of points on the underlying manifold.

When we apply Diffusion Maps to sequences of time averages, the ergodic quotient plays the role of the differentiable manifold. The interpretation of numerical results as discretizations of the Laplace-Beltrami eigenfunctions is strictly valid only in the case when ergodic quotient is a differentiable manifold, as it seems to be for some simple systems, but not necessarily in the general case. Despite this fact, this paper demonstrates that Diffusion Maps are practically useful in analysis of dynamical systems and we expect that a consistent interpretation will be found even for sets without a differential structure.

The properties of diffusion eigenfunctions are demonstrated on simple examples in papers \cite{Coifman:2005bk,Lee:2010cz}, and detailed analysis of the mathematical properties can be found in \cite{Jones:2008cy,Jakobson:2001cf,Uhlenbeck:1976wb}. In short, we can expect the lowest-index eigenfunctions to be combinations of indicators on disconnected components of the ergodic quotient. The eigenfunctions of higher index behave similarly to Fourier modes on each of the components: first varying over coarser features and, as the order is further increased, varying over finer features.

In the spirit of representation sequence \eqref{eq:EQ}, we will add embedding into the eigenfunctions of the diffusion operator as the final step:
\begin{align*}
  x &\xrightarrow{\text{I}} \mu_x \xrightarrow{\text{II}} ( \dots, \tilde f_k(x), \dots )_{k\in\Z^D} \notag\\
  &\xrightarrow{\text{III}} (\lambda_1 \chi^{(1)}(x), \lambda_2 \chi^{(2)}(x), \dots),
\end{align*}
and refer to embedding $x \mapsto (\lambda_1 \chi^{(1)}(x), \lambda_2 \chi^{(2)}(x), \dots)$ as the \emph{diffusion coordinates} of the dynamics. The sketch of the embeddings was given at the beginning as Figure \ref{fig:sampling}.

\begin{figure}[htpb]
  \centering
  \includegraphics[width=40mm]{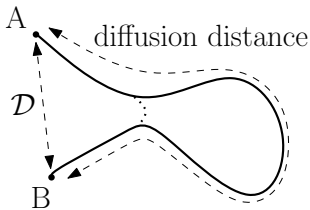}
  \caption{A sketch of distances in the space of trajectory averages. The full line represents the ergodic quotient, the chordal distance is the empirical metric, and the dotted line indicates the point where a small amount of noise can change the geodesic distance discontinuously, but not the diffusion distance.}\label{fig:distances}
\end{figure}

The euclidean distance in the diffusion coordinate representation carries a geometric meaning, which is different from, but related to the meaning of the distance in the ambient space of the ergodic quotient, spanned by averaged observables. The empirical distance $\mathcal D$ is the chordal distance in the ambient space, i.e., the length of a shortest path between points. Diffusion distance, like geodesic distance, takes into account only paths that lie within the ergodic quotient. However, while the geodesic distance is the length of the shortest path connecting two points, diffusion distance is, in a sense, the average length of all paths that connect the two points. This average is computed as the difference in heat flow reaching two points when heat is injected at any other point on the quotient, averaged over all possible positions of the heat source. A more detailed description can be found in \cite{Coifman:2006cy}.

For our purposes it is sufficient to state that averaging over all paths provides robustness to noise. If the noise introduces a short-cut between two parts of the manifold, as depicted in Figure \ref{fig:distances}, the geodesic distance changes discontinuously, however, the diffusion distance does not, as averaging over all paths ensures that a small amount of noise does not change the geometry significantly \cite[][{\S}4.2]{Lee:2010cz}.

The Diffusion Maps algorithm approximates the operator $e^{-t\Delta}$ by a $N \times N$ matrix, where $N$ is the number of points sampled from the ergodic quotient: in our case equal to the number of initial conditions. Discrete diffusion modes are obtained as eigenvectors of that matrix. The \ref{sec:diffusionmaps} provides more details about the implementation: the input to Diffusion Maps is a matrix of pairwise $\mathcal D_{-s}$ distances between computed trajectories, while the output is the set of diffusion coordinate vectors, assigning diffusion coordinates to each trajectory.

\subsection{Example: Steady ABC Flow}
\label{sec:abcflow}

We will demonstrate the use of the ergodic quotient on the Arnold-Beltrami-Childress (ABC) flow \cite{Dombre:1986td}. This flow was previously used to demonstrate identification of Lagrangian Coherent Structures in \cite{Haller:2001vh} and almost-invariant sets in \cite{Froyland:2009ti}. It is a steady, volume-preserving flow, evolving on a 3-torus. Despite its very simple system of ODEs, the state space of the ABC flow contains both regular vortices and chaotic zones. 

\begin{figure*}[htpb]
  \centering
  \subfloat[Six subsets in the state space corresponding to six clusters of aggregation. Subsets contain primary vortices of the ABC flow. The chaotic sea between vortices is the seventh cluster and appears as the void between vortices.]{
  \includegraphics[height=60mm]{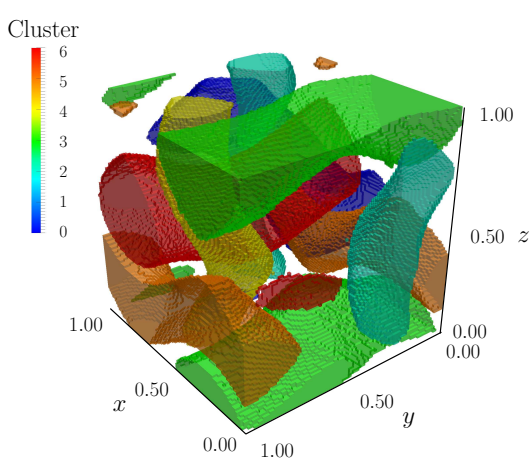}
\label{fig:abcstatespace6}}\hspace{.1\linewidth}
  \subfloat[Projection of ergodic quotient onto three out of ten diffusion coordinates used for clustering, axis labels are indices $k$ of coordinates $\lambda_k \chi^{(k)}$.  Points were colored according to membership in clusters. Cluster at the top of the figure corresponds to the chaotic sea.]{
  \includegraphics[height=60mm]{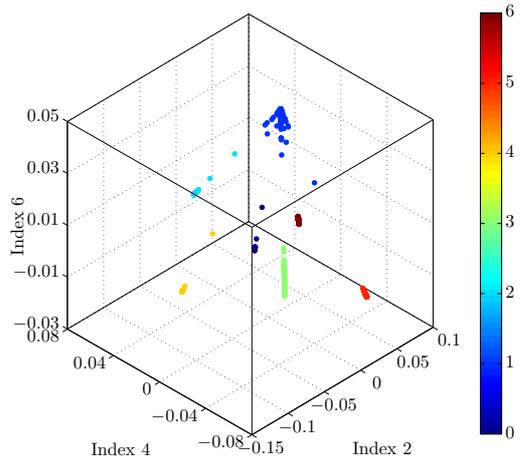}
\label{fig:abcergquot}}
  \caption{Six primary vortices extracted by k-means clustering ($k=7$) of projection of ergodic quotient onto first 10 diffusion coordinate. ABC flow \eqref{eq:abcmodel} was simulated with $A=\sqrt{3}$, $B=\sqrt{2}$, $C=1$, from $N=1000$ initial conditions uniformly distributed in basic periodicity cell $[0,1)^3$, with observables cut off at wavenumber $K=10$, and convergence tolerance $\text{ATOL}=2 \times 10^{-4}$.}\label{fig:sixvortices}
\end{figure*}

Trajectories of the ABC flow obey the following equations on $(x,y,z) \in \mathbb{T}^3$
\begin{align}    \label{eq:abcmodel}
  \begin{bmatrix}
    \dot x \\ \dot y \\ \dot z
  \end{bmatrix}
    &= A(x,y,z) \triangleq \frac{1}{2\pi}
    \begin{bmatrix}
      {\displaystyle A \sin 2\pi z + C \cos 2\pi y }\\
      {\displaystyle B \sin 2\pi x + A \cos 2\pi z }\\
      {\displaystyle C \sin 2\pi y + B \cos 2\pi x }
    \end{bmatrix}.
\end{align}
As in \cite{Haller:2001vh,Froyland:2009ti}, we used $A=\sqrt{3}$, $B=\sqrt{2}$, $C=1$, but the domain was rescaled to $\Vol \T^3 = 1$. Observables are the complex Fourier harmonics \eqref{eq:fourierfunction} with wavevectors $k \in [-10,10]^3 \cap \Z^3$. The trajectories were integrated using an adaptive solver (Radau) for at least $T=500$ per initial condition, but possibly longer to achieve tolerance $\text{ATOL}=2 \times 10^{-4}$ in convergence of averages for each observable and trajectory (see \ref{sec:convergence}).\footnote{The precise value of the tolerance chosen did not make a big difference in data obtained by the method for the ABC flow. Reducing tolerance further extends the computation on a relatively small number of trajectories (see Figure \ref{fig:convergence}) while most of the trajectories require very similar simulation times even if the tolerance is reduced by an order of magnitude.}

In short, the steps used for identification of coherent structures are:
\begin{enumerate}
\item Select initial conditions $x_i$,
\item Compute trajectories and time averages until convergence,
\item Compute $H^{-s}$ distance matrix \[(d_{ij}) = \mathcal D_{-s}( \mu_{x_i},\mu_{x_j} ),\]
\item Feed $(d_{ij})$ into Diffusion Maps to compute diffusion coordinates $\left( \lambda_1 \chi_1(x_i), \lambda_2 \chi_2(x_i), \dots\right)$,
\item Compute $k$-means clusters (optional, for visualization) .
\end{enumerate}

Connected components in the ergodic quotient correspond to sharply distinct coherent features. Information about all distinct components will rarely be visible from a single, or a few, diffusion coordinates. To quickly recover the dominant features in the flow, we can apply a clustering algorithm. Clustering based on the euclidean distance, e.g., the $k$-means algorithm, is justified since the euclidean distance approximates the diffusion distance on the ergodic quotient. 

The Figure \ref{fig:sixvortices} demonstrates the application of k-means clustering, with $k=7$, to the first ten diffusion coordinates on the ergodic quotient of the ABC flow. The six clusters correspond to six primary vortices that were identified in \cite[][Fig. 4]{Dombre:1986td}. Furthermore, these are the same structures visible in LCS analysis of the flow \cite[][Fig. 1]{Haller:2001vh} and the eigenvectors of the Ulam discretization of Perron-Frobenius operator \cite[][Fig. 6]{Froyland:2009ti}. This corroborates our assertion that the components of the ergodic quotient correspond to features typically identified as coherent structures in fluid flows.

In contrast to LCS and Ulam's methods, the initial conditions do not have to be placed in the entire state space, as mentioned in Section \ref{sec:empmeas}: it is sufficient to place them on a set that intersects interesting coherent structures that we want to analyze or visualize. In Figure \ref{fig:xvortex} we show a visualization based on $N=500$ initial conditions sourced from a rectangle $(y,z) \in [0.35,0.8] \times [0.6,0.9]$, on the $x=0$ face of the unit cell. This region intersects one of the six primary vortices shown in Figure \ref{fig:abcstatespace6}. Since we focused the initial conditions in a smaller region of state space, the same number of diffusion coordinates discerns smaller features in the state space, in this case the secondary $1:2$ vortices that wrap around the central vortex.

\begin{figure*}[htpb]
  \centering
  \subfloat[Five subsets identified by the k-means algorithm with $k=5$ acting on first ten diffusion coordinates.]{
\includegraphics[height=45mm]{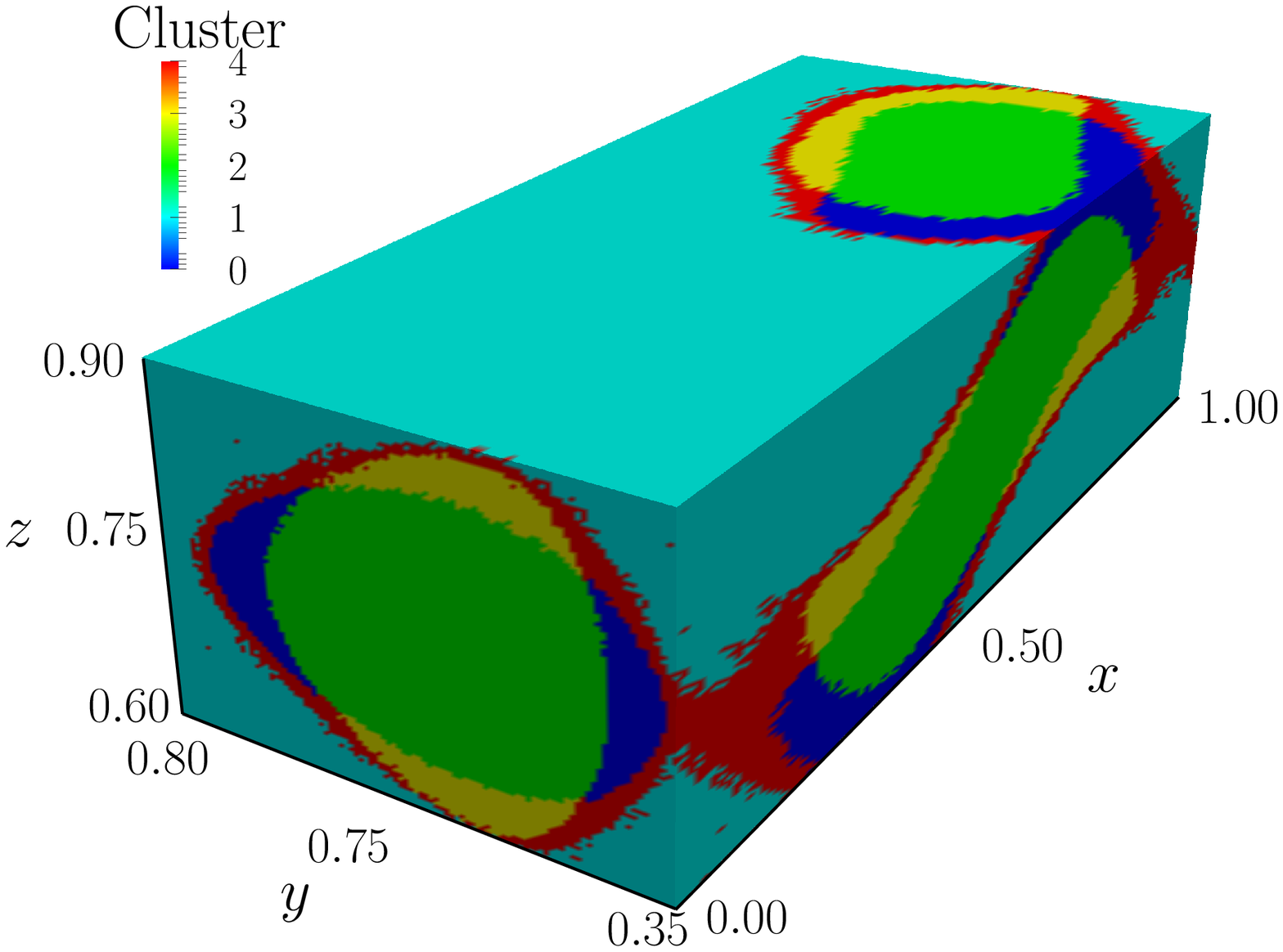}
\label{fig:xvortexstatespace5}
}\hfill
  \subfloat[Projection of the ergodic quotient onto three diffusion coordinates, axis labels are indices $k$ of coordinates $\lambda_k \chi^{(k)}$. Points were colored according to membership in clusters.]{
\includegraphics[height=45mm]{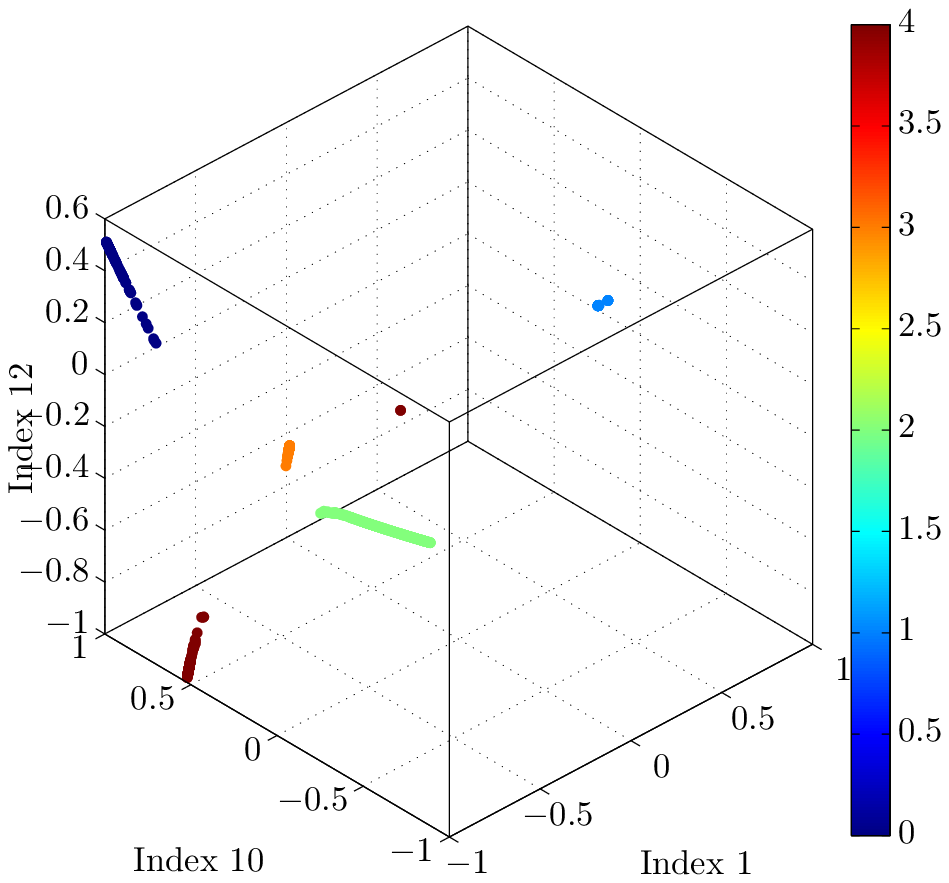}
\label{fig:xvortexergquot}
}\hfill
  \subfloat[Level sets of the 10th diffusion eigenvector. The chaotic sea has been cropped for clarity of display by thresholding the first diffusion eigenvector.]{
\includegraphics[height=45mm]{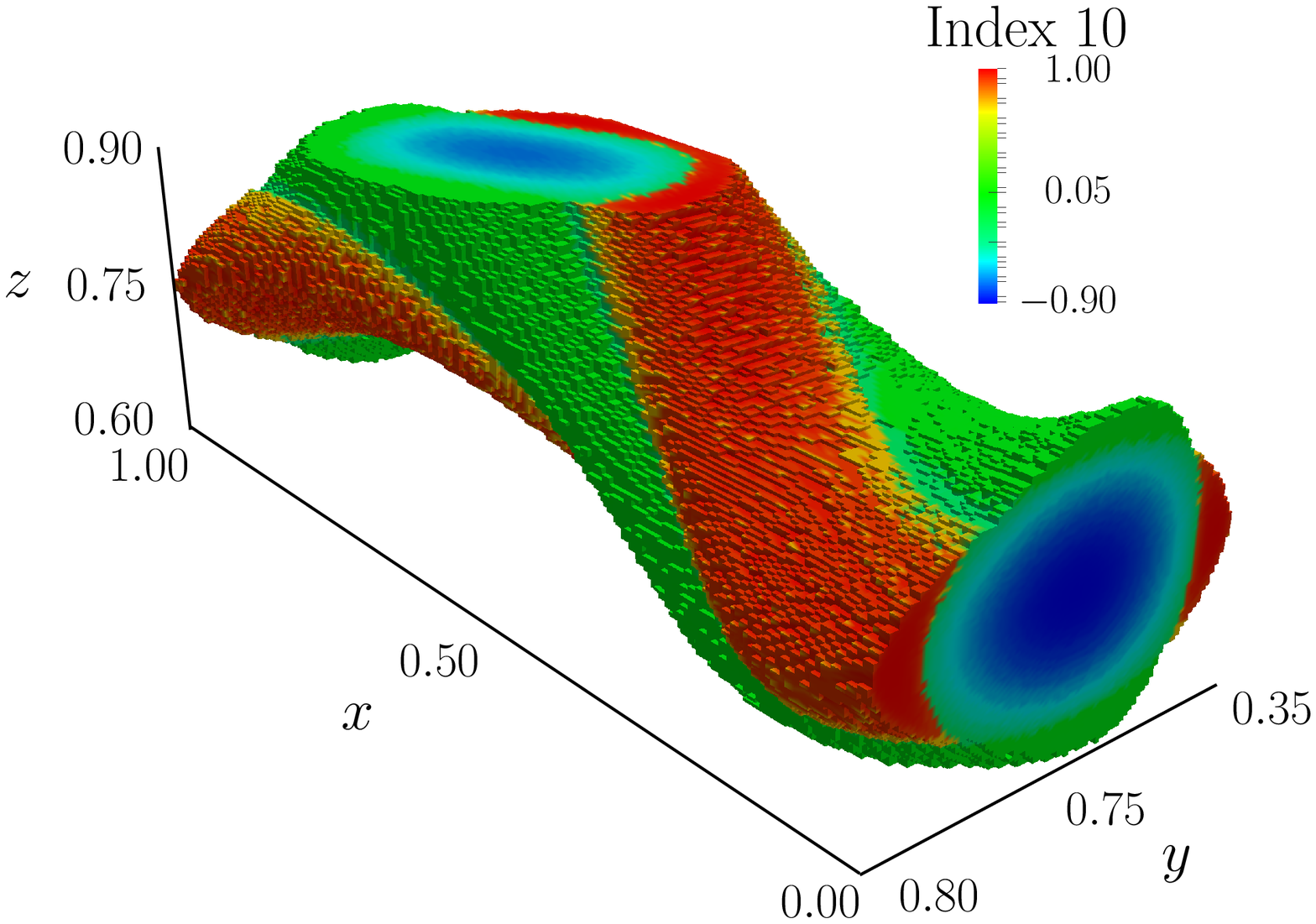}
\label{fig:xvortex_ec10}
}\caption{A primary vortex aligned with the $x$-axis of the ABC flow ($A=\sqrt{3}$, $B=\sqrt{2}$, $C=1$). Lobe-shaped structures that wrap around the central vortex correspond to secondary $1:2$ vortices. Results are based on $N=500$ trajectories initialized uniformly in rectangle $x=0$, $(y,z) \in [0.35,0.8] \times [0.6,0.9]$ , with observables cut off at wavenumber $K=10$, and convergence tolerance $\text{ATOL}=2\times10^{-4}$.}\label{fig:xvortex}
\end{figure*}

The Figure \ref{fig:xvortex_ec10} shows that some diffusion coordinates are local-ordering functions on the elliptic zones, acting as computationally constructed local integrals of motion. The 10th ergodic coordinate acts as an energy-like integral of motion, locally on the inner primary vortex. The blue filament in upper-left corner of Figure \ref{fig:xvortexergquot} indicates that the positive values of the 12th coordinate vary approximately continuously over the secondary vortex lobes, colored in blue in Figure \ref{fig:xvortexstatespace5}. This is an indication that a plot similar to Figure \ref{fig:xvortex_ec10} could be made to demonstrate local-ordering on a part of the secondary vortex, using the 12th coordinate.

\section{Extensions including time-varying features}
\label{sec:time-varying}

\subsection{Periodically-driven flows}
\label{sec:periodically-driven}

Although we have presented the construction of the ergodic quotient for the autonomous flows, in practice, nothing prevents one from applying the exact same computational procedure to a non-autonomous flow. In this Section, we explain that when system is non-autonomous, yet periodically-driven, one still obtains useful and non-trivial information about the structure of invariant sets.

The non-autonomous systems can be embedded in an autonomous system by extending the state space. Let $\mathcal M_e = \mathcal M \times \mathcal T$ be an extension of the state space $\mathcal M$. For periodically driven flows, $\mathcal T$ can be taken as $\T$, while for aperiodically driven flows and finite-time flows $\mathcal T$ can be taken as $\R$ and $[t_i,t_f] \subset \R$, respectively.

For a non-autonomous dynamical system defined by an ODE $\dot x = A(x, t)$, where $x \in \mathcal M$, we can define the extended, autonomous system through
\begin{align*}
  \dot x &= A(x,\tau), \\
  \dot \tau &= c,
\end{align*}
where $(x,\tau) \in \mathcal M_e$ and $c$ is a time-rescaling constant. For periodically-driven systems on compact $\mathcal M$, i.e., for $\mathcal T = \T$, the extended state space $\mathcal M_e$ is also compact. If, additionally, the flow map $\phi_t : \mathcal M_e \to \mathcal M_e$ of the system is continuous, the system $(\mathcal M_e, \phi_t)$ fulfills all the conditions that we required for the construction of the ergodic quotient map $\pi_e : \mathcal M_e \to l^\infty$, given in Section \ref{sec:empmeas}. 

The difference between an autonomous system on $\mathcal M$ and a periodically-driven system cast into an autonomous system on $\mathcal M_e$ is in the eye of observer. Typically, we will treat only the first factor of $\mathcal M_e$ as ``physical'' states, always keeping in mind that $\mathcal T$ was a formal construct accounting for the ``physical'' time. This is reflected in observables usually still available as functions $f : \mathcal M \to \C$. 

If the observables on $\mathcal M$ were chosen as Fourier functions $f_k : \mathcal M \to \C$, with $k \in \Z^D$, we can interpret them as a subset of Fourier observables $g_w : \mathcal M_e \to \C$, $w \in \Z^{d+1}$, 
\[
g_w(x,\tau) = C f_k(x) e^{i2 \pi w_{d+1} \tau},
\]
with the constant $C$ normalizing the observable to $\norm{g_w}_\infty = 1$.
When wave-vector $w$ is constrained to $w = (k,0)$, the identity $g_w(x,\tau) \equiv f_k(x)$ holds for any $\tau$. Consequently, the trajectory averages $\tilde g_w(x,\tau) = \tilde f_k(x)$ are then insensitive to the initial ``physical'' time $\tau$.

If we proceed with such a partial set of observables $g_w$, where $w = (k,0)$, $k \in \Z^D$, we cannot in general hope to obtain the full ergodic quotient by mapping $x \to \tilde g_w(x,\tau)$. Denote the partial quotient map by $\pi\vert_{\mathcal M} : \mathcal M_e \to l^\infty(\Z^D)$, defined through 
\[
\pi\vert_{\mathcal M}( x,\tau ) := (\dots, \tilde g_w(x,\tau),\dots)_w,
\]
for $w = (k,0)$ and $k \in \Z^D$, with the associated partial quotient $\xi\vert_{\mathcal M} \triangleq \pi\vert_{\mathcal M}(\mathcal M)$. Looking at pre-images of any trajectory-averaged observable, it follows that $\tilde g_w^{-1}(c) = \tilde f_k^{-1}(c) \times \T$. Therefore, for any point $p \in \Z^D$, with pre-image $\pi^{-1}(p)$ formed using observables $f_k$, the pre-image under partial quotient map is $\pi\vert_{\mathcal M}^{-1}(p) = \pi^{-1}(p) \times \T$. Such pre-images are still invariant sets for the  dynamics, as they are formed from joint level sets of averaged observables. The difference between the invariant partition corresponding to the ergodic quotient of the extended system $\xi_e$ and the invariant partition corresponding to the partial quotient $\xi\vert_{\mathcal M}$ is that only $\xi_e$ yields the ergodic partition, in general.

In plainer terms: any analysis based on the partial quotient $\xi\vert_{\mathcal M}$ will identify features distinct only in physical state-space $\mathcal M$, but not distinctions in physical time. This is sometimes desirable; as a trivial example consider a simple oscillator, $\dot \theta = 1$ for $\theta \in \mathcal M = \T$, and, even though it is already autonomous, consider its formal extension into $\mathcal M_e = \T^2$. Analysis of the partial quotient $\xi\vert_{\mathcal M}$ would identify a single invariant set $\mathcal M$, while the analysis of $\xi_e$ would identify a continuum of ergodic sets, each corresponding to a trajectory with a different initial phase. Depending on the application, either of the two analysis can contain valuable information about the analyzed system.

A common technique used in analysis of periodically-driven systems is the Poincar\'e map $\Phi_{t_0}^n:\mathcal M \to \mathcal M$ associated to the system:
\[
\Phi_{t_0}^n(x) := \phi_{t_0 + 2\pi n}(x).
\]
It is a simple result to show that for continuous flows $\phi_t$, the choice of the initial time (or phase) $t_0$ results in topologically-conjugate maps, implying that each there exists a homotopy such that each invariant set of $\Phi_{t_1}$ has a homotopy-related invariant counterpart in state space of $\Phi_{t_2}$. However, the analysis presented in this paper goes beyond topology, as we study geometry of the features in the state space.

\begin{figure}[ht]
  \centering
  \includegraphics[width=.7\linewidth]{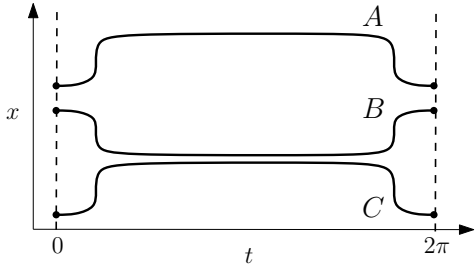}
  \caption{Three trajectories illustrating the distinction between the direct study of a Poincar\'e map and coherent features identified by partial ergodic quotient analysis.\label{fig:map-vs-eq}}
\end{figure}
Consider three trajectories sketched in the Figure \ref{fig:map-vs-eq}. If a Poincar\'e map was taken at $t \equiv 0 \pmod{2\pi}$ all of them would be represented as fixed points in state space of the map. Looking at just the Poincar\'e map, we would be much more likely to identify trajectories $A$ and $B$ as similar, since they pierce the Poincar\'e surface at nearby locations. However, considering the entire evolution, trajectories $B$ and $C$ deviate significantly only over relatively short time interval. Since time averages capture mean behavior, $H^{-s}$ distance on $\pi\vert_{\mathcal M}$ would (correctly) identify $B$ and $C$ as similar, regardless of the time instance at which Poincar\'e section used for visualization is taken.

In summary, constructing the partial quotient, i.e., embedding the state space using trajectory averages of observables on the ``physical'' states  $\mathcal M$, identifies ergodic partition of the Poincar\'e map. Additionally, the metric structure on the partial quotient reveals which invariant sets are similar based on the \emph{entire trajectory evolution}, not just on the points at which trajectories pierce the Poincar\'e surface. Therefore, this method extends the Poincar\'e map analysis by providing additional information which is potentially useful in identifying coherent structures. To demonstrate, we proceed with a numerical example.

\subsection{Example: Periodically-driven 3D Hill's Vortex Flow}
\label{sec:hills-vortex}

\begin{figure*}[htb]
  \centering
\subfloat[The slice at $\theta=0$ through the Poincar\'e section at $t \equiv 0\pmod{1}$ in the invariant region; color interpolated using the first diffusion eigenfunction $\chi_1$ from points in panel \protect\subref{fig:hill-lopert-points}.] {\label{fig:hill-lopert-section}
\includegraphics[height=60mm]{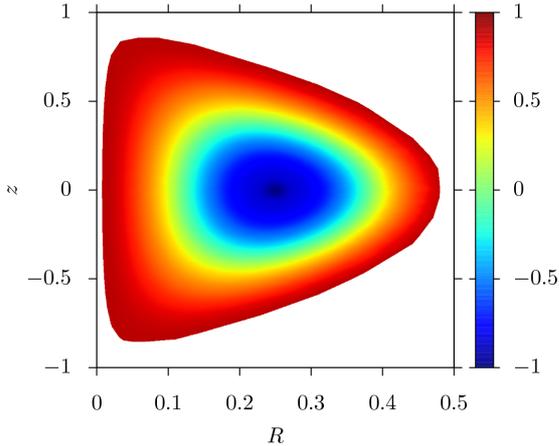}}
\hspace{.1\linewidth}
  \subfloat[Points of the Poincar\'e section used for interpolation in panel \protect\subref{fig:hill-lopert-section}, colored using the first diffusion eigenfunction $\chi_1$.]{\label{fig:hill-lopert-points}
\includegraphics[height=60mm]{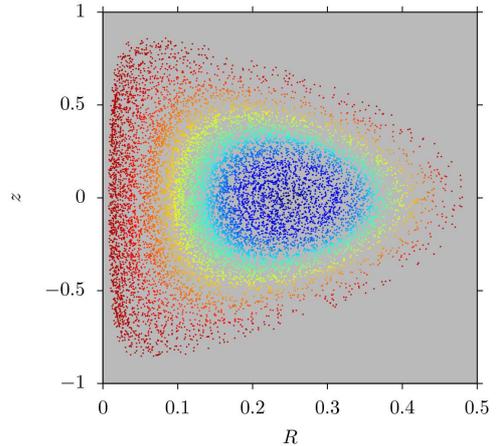}}\\
\subfloat[Bounded level sets of the unperturbed Hamiltonian $H(R,z) = Rz^2 - R +2R^2$ in a $(R,z)$-plane.]{\label{fig:hill-hamiltonian}
\includegraphics[height=45mm]{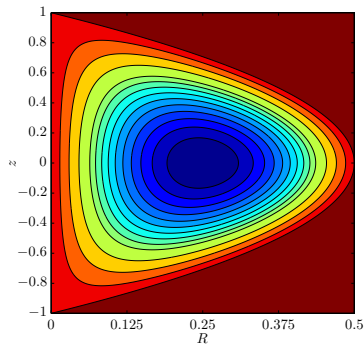}}
\hspace{.1\linewidth}
\subfloat[Diffusion eigenfunctions $\chi_k$, $k=2,3,4,5$ of the ergodic quotient plotted against  $\chi_1$.]
{\label{fig:hill-lopert-eq}
\includegraphics[height=45mm]{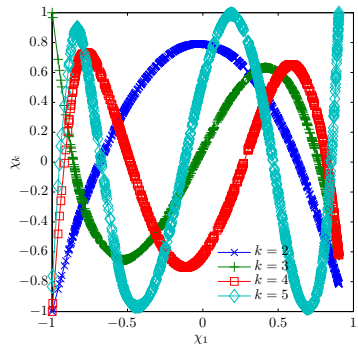}}
  \caption{Structure of the invariant sets in the state space of a periodically-forced 3D Hill's vortex at low perturbation $c=\epsilon=0.01$.}
  \label{fig:hill-lopert}
\end{figure*}

As an example of a periodically-driven flow, consider the following ODE,  evolving states $x = (R,z,\theta) \subset \R^+ \times \R \times \T$:
\[
\begin{bmatrix}
  \dot R \\ \dot z \\ \dot \theta
\end{bmatrix} = \underbrace{
\begin{bmatrix}
  2Rz \\ 1 - 4R - z^2 \\ c/2R
\end{bmatrix}
+ \epsilon
\begin{bmatrix}
  \sqrt{2R} \sin \theta \\
  z(\sqrt{2R})^{-1} \sin\theta \\
  2\cos \theta
\end{bmatrix}
\sin 2\pi t}_{A(x,t)},
\]
where parameters  $c$ and $\epsilon$ are swirl and perturbation strengths, respectively. For all values of $c$ and $\epsilon$ the system preserves the state space volume, i.e., $\nabla \cdot A \equiv 0$, where $\nabla = (\partial_R, \partial_z, R^{-1}\partial_\theta)$. This system was studied analytically in \cite{Mezic:1994ez,Vaidya:2011wo} as an example of a non-hamiltonian system showing Kolmogorov-Arnold-Moser-type structures at low values of $\epsilon=c$. In this paper, we will show the structure of invariant sets for both low- and high-valued perturbations for the purposes of illustrating the method of analysis; therefore, we will not explore here the bifurcation mechanism behind the structures in the state space. A preliminary numerical study was presented in \cite{Budisic:2011ky}.

The results of the first simulation (Figure \ref{fig:hill-lopert}) demonstrate the low-perturbation structure of the state space, at $c=\epsilon=0.01$. Uniformly distributed initial conditions ($N=1000$) were seeded on the $\theta=0$ plane in the rectangle $(R,z) \in [0.01,0.30] \times [-0.5,0.5]$, which is within a bounded invariant set. The observables were chosen as Fourier basis functions on $[0.0,0.5] \times [-1.0,1.0] \times [0,2\pi] \subset \mathcal M$, with wavevectors in the box $\abs{k_R} \leq 8$, $\abs{k_z} \leq 8$, $\abs{k_\theta} \leq 4$. Simulations were run for no less than $T_{min} = 600$, and until tolerance $\text{ATOL}=10^{-4}$ was achieved (see \ref{sec:convergence} for definition), with most simulations terminating before $2000$ time units expired. 

At $c = \epsilon = 0$, any fixed-$\theta$ plane is invariant and on it the system conserves the Hamiltonian $H(R,z) = Rz^2 - R +2R^2$. The Figure \ref{fig:hill-lopert} demonstrates that at low perturbation values, the structure of the cross-section of invariant sets is very similar to the level sets of the Hamiltonian, despite the flow having a full three-dimensional character. At the plotted scale, $\chi_1$ appears to be a monotonic function of the Hamiltonian; however, the non-hamiltonian KAM theory predicts that invariant tori do not form a continuum, but are interspersed with thin chaotic zones. Observables of low wavenumbers effectively coarse-grain dynamics; since the chaotic zones are so thin at low perturbation values, they are not identified as significantly different invariant sets. Plotting higher-order diffusion eigenfunctions against the $\chi_1$ reveals the similarity of diffusion eigenfunctions to the Legendre polynomial system, which is likely due to connection of Legendre polynomials to spherical harmonics and the spherical shape of the Hamiltonian function. In this case it is particularly obvious how monotonic parametrization of the ergodic quotient can serve as a proxy for conserved quantities, possibly leading to Fomenko-type analysis.

\begin{figure*}[htb]
  \centering
  \subfloat[The Poincar\'e section at $t \equiv 0 \pmod{1}$. Segments of state space corresponding to clusters $0$ and $3$ shown in blue and red, respectively. The remaining clusters are cropped for clarity. ]{\label{fig:hill-hipert-iso}
\includegraphics[height=50mm]{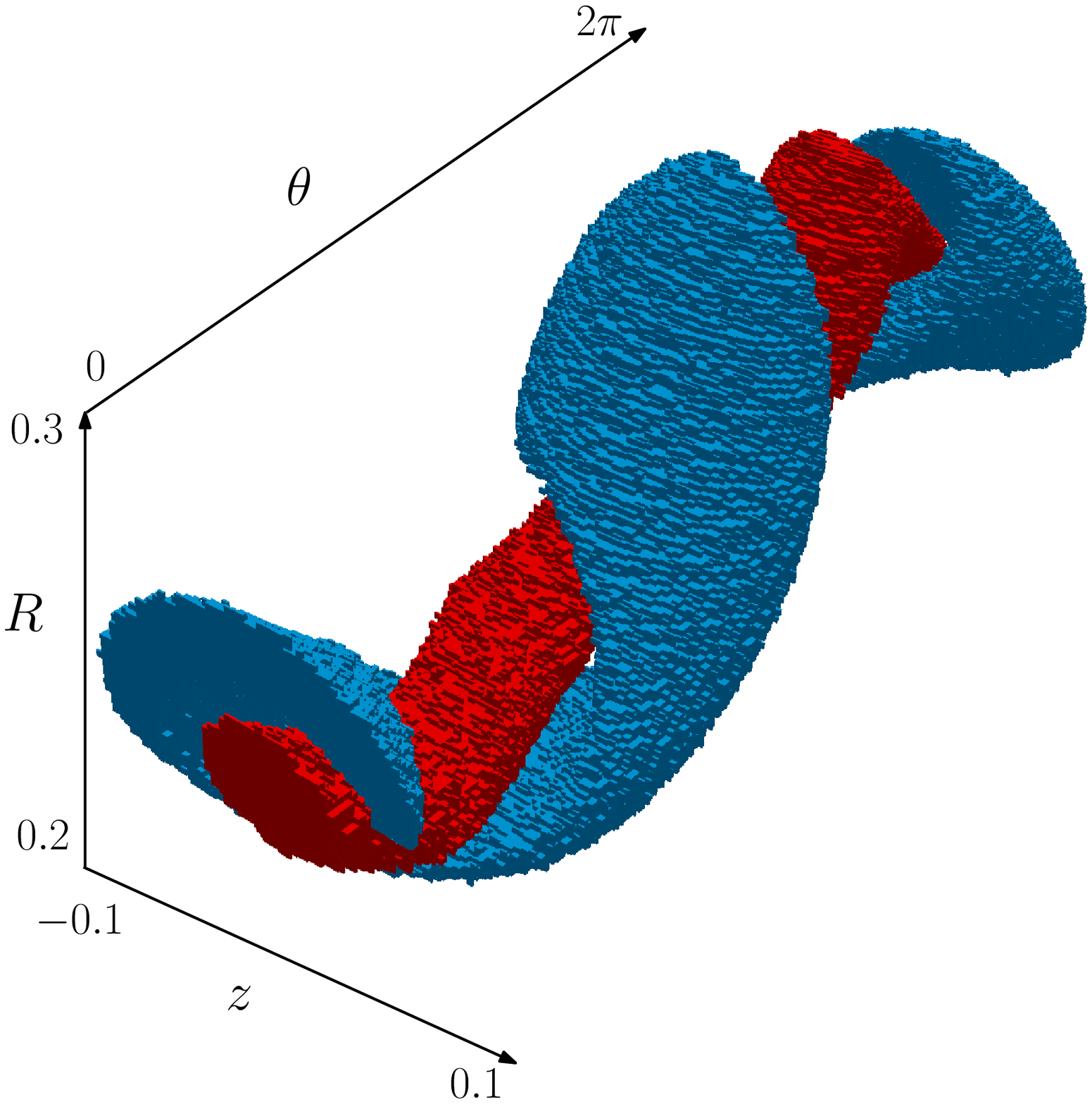}}
\hspace{.1\linewidth}  
\subfloat[The slice at $\theta=0$ through the Poincar\'e section in panel \protect\subref{fig:hill-hipert-iso}, displaying all clusters; colors reflect cluster labels ($0,1,2,3$).]{\label{fig:hill-hipert-slice}
\includegraphics[height=50mm]{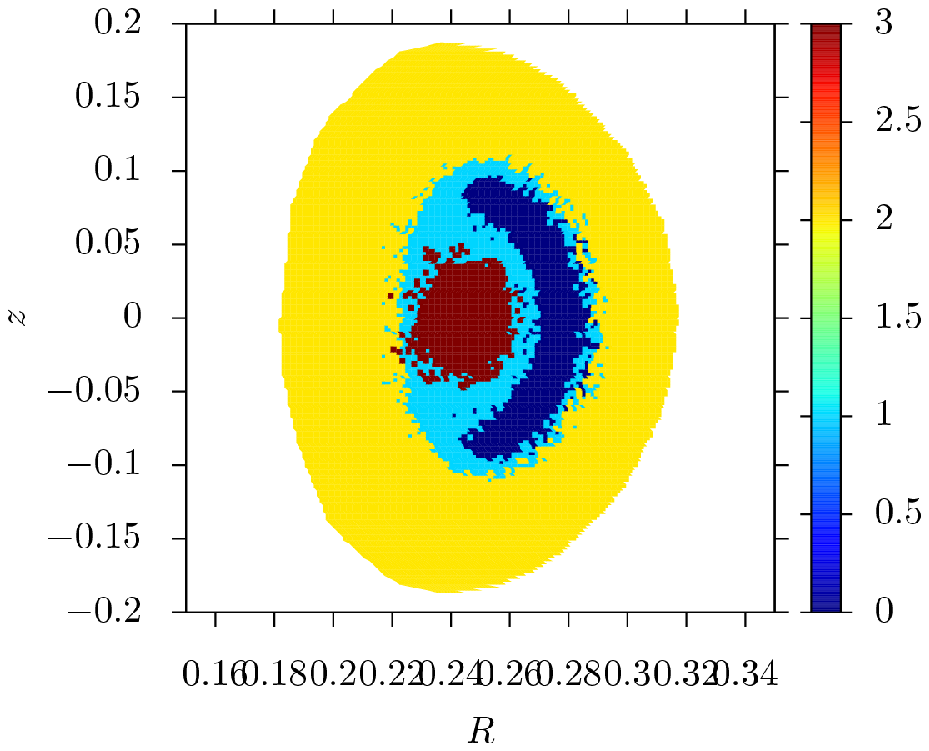}}
 \hfill \\
  \subfloat[Embedding of the partial quotient into first two diffusion eigenfunctions $\chi_1$, $\chi_2$,  with color indicating the cluster assigned to the point.]
{\label{fig:hill-hipert-eq}
\includegraphics[height=50mm]{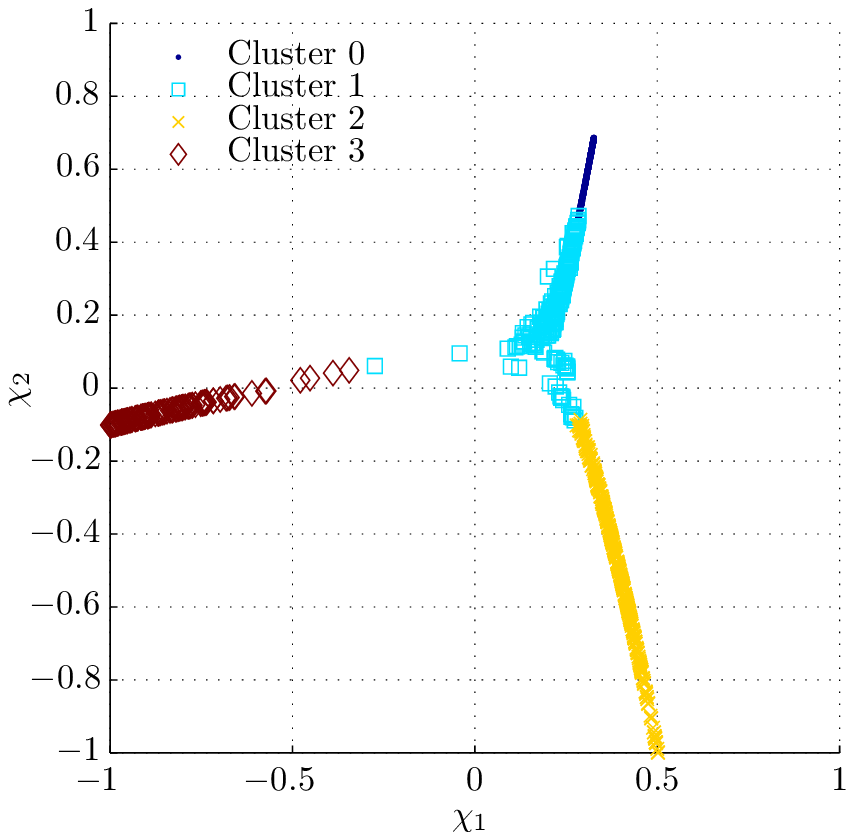}}
\hspace{.1\linewidth}
  \subfloat[Diffusion eigenfunctions $\chi_k$ of the partial quotient, plotted against  $\chi_1$ for points in the cluster $2$]{\label{fig:hill-hipert-diff}
\includegraphics[height=50mm]{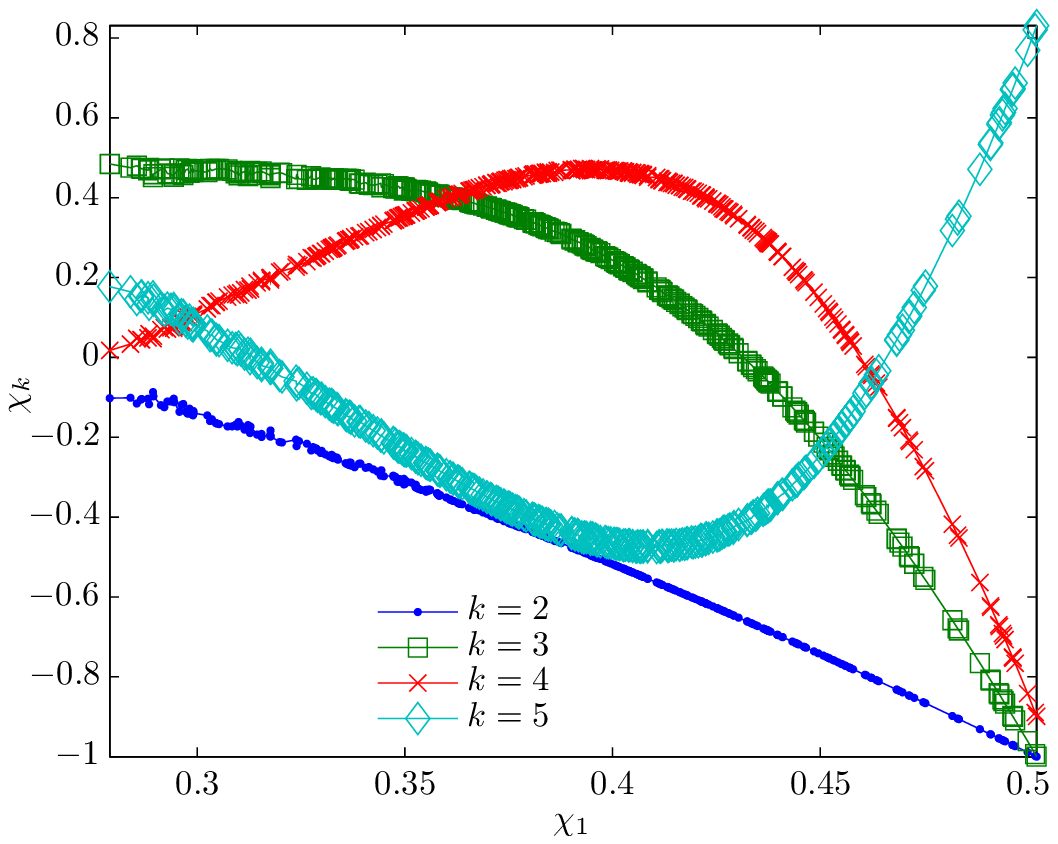}} 
\hfill
  \caption{Structure of the invariant sets of a periodically-forced 3D Hill's vortex at high perturbation $c = \epsilon = 0.3495$. Diffusion coordinates were clustered using $k$-means algorithm into $4$ clusters to extract the main features. The Poincar\'e section used for visualization is at $t \equiv 0 \pmod{1}.$}
  \label{fig:hill-hipert}
\end{figure*}

When the perturbation is increased, the dissolution of invariant tori continues, but an additional invariant set becomes visible in the state space. For the next simulation, we have chosen $c = \epsilon = 0.3495$. There is nothing special about this particular parameter value: all perturbations $c=\epsilon$ between roughly $0.2$ and $0.4$ exhibit the same topological structure, visible on similar scales. As the region where trajectories stay bounded shrinks with the growth of the perturbation, we have seeded the initial conditions uniformly ($N = 1000$) in $(R,z) \in [0.2,0.3] \times [-0.1,0.1]$ in $\theta=0$ plane, with initial time $t=0$. The observables were Fourier functions on $[0.0,0.5] \times [-0.5,0.5] \times [0,2\pi)$, with the same wave-vector bounds as before. The simulation times and tolerances also remained as in the low-perturbation case.

The Figures \ref{fig:hill-hipert-iso} and \ref{fig:hill-hipert-slice} show a secondary coherent set that becomes visible within the onion layers at higher perturbation values. The primary tubular invariant set (in red, Cluster $3$) remains at the center, with the secondary invariant set (in blue, Cluster $0$), sickle shaped in its cross section, wrapping around the tubular core in $1$:$2$ resonance. Both primary and secondary sets are enclosed by the outer tubular shells (in yellow, Cluster $2$). Here, we stress again, that even though we are showing sets in a three-dimensional Poincar\`e section, they are truly time invariant, despite the time-dependent forcing. More precisely, to each of the invariant sets $C \subset \mathcal M$ identified by clustering, there exists a direct extension in the extended state space $C \times \T \subset \mathcal M_e$.

The topology of the embedded quotient reflects the change in the invariant set structure, compared to the low perturbation case. While embedding in $\chi_1$ vs.\ $\chi_2$ in Figure \ref{fig:hill-lopert-eq} showed a topological line interval, the high perturbation case in Figure \ref{fig:hill-hipert-eq} shows that a branch splits off the interval which was once formed by segments corresponding to clusters that compose the inner and outer tubular regions.  The Figure \ref{fig:hill-hipert-diff} takes a closer look at diffusion eigenfunctions within only one of the identified clusters. Comparing the shapes of coordinate functions with the low perturbation case in Figure \ref{fig:hill-lopert-eq} in interval $\chi_1 \in [0.4, 1]$, we can again notice the similarity to Legendre polynomials, indicating that the flow appears to locally conserve a continuous motion integral, even if that is not the case globally.

\subsection{Harmonic quotients}
\label{sec:harmonicquotient}

While this paper deals primarily with time-invariant structures, the Section \ref{sec:periodically-driven} describes how even time-varying flows can be analyzed using trajectory averages, through embedding of the system into an extended state space, expanded by time variable. Fourier observables $g$ on the extended state space $\mathcal M_e = \mathcal M \times \mathcal T$ were formed from Fourier observables $f_k : \mathcal M \to \C$ as $g_w(x,t) = f_k(x) e^{i2\pi \omega t}$, where $w = (k,\omega)$ was the extended wave-vector, with $\omega$ the wavenumber along time axis. Time averages of observables were given by
\begin{align*}
  \tilde g_w(x,t) &= \lim_{T \to \infty} \frac{1}{T} \int_0^T g[ \phi_\tau(x), \tau] d\tau \\
  &=\lim_{T \to \infty} \frac{1}{T} \int_0^T [f \circ \phi_\tau](x) e^{i2\pi\omega \tau} d\tau.
\end{align*}
When $\mathcal T = \T$, it was sufficient to consider only $\omega \in \Z$ to obtain a full function basis on $\mathcal M_e$. However, when the time is the full real line, as in the aperiodic case, we require a real-valued wavenumber $\omega \in \R$. The complete functional basis on $\mathcal M_e$ is then uncountable, and the ergodic quotient of the extended system, evolving in $\mathcal M_e$, would not be a subset of a sequence space anymore.

Nevertheless, even in the case of flows $\phi_t$ generated by autonomous systems, it is of interest to fix $\omega$ and to consider functions $f^{(\omega)}$ given by averages
\begin{align*}
  \tilde f^{(\omega)}(x) \triangleq \lim_{T \to \infty} \frac{1}{T}
  \int_0^T [f \circ \phi_\tau](x) e^{i2\pi\omega \tau} d\tau,\label{eq:harm-avg}
\end{align*}
termed \emph{harmonic averages}. The harmonic averages were considered by Wiener and Wintner under the name \emph{Fourier averages} \cite{Wiener:1941wy}, but more recently, they have been analyzed in \cite{Mezic:2004is}. We use them here to provide a generalization of the ergodic quotients which can be used to analyze periodic and quasi-periodic transport.

For any fixed $\omega \in \R$, and Fourier observables $f_k$, the embedding $\pi_\omega(x) = (\dots, \tilde f_k^{(\omega)}(x), \dots)_{k \in \Z^D}$
generates an analog of the ergodic quotient, the \emph{harmonic quotient at frequency $\omega$}. The ergodic quotient is then a special case of the harmonic quotient, computed for $\omega = 0$. In understanding the ergodic quotient, the level sets of averaged observables $\tilde f$ played a crucial role as invariant sets. Similarly, level sets of $\tilde f^{(\omega)}$ provide insight into periodic sets and periodic transport when $\omega \in \Q$, and wandering sets and quasi-periodic transport when $\omega \not \in \Q$ \cite{Mezic:2004is}. More concretely, if $\omega = 1/2$, $\tilde f^{(\omega)}$ will vanish over all points $x$ that are not $\phi_t$-periodic with period $2$. Consequently, the joint level sets of $\tilde f^{(\omega)}_k$ away from $(0,0,\dots)$ identify sets in state space between which there exists dynamical transport of period $2$.

The functional mapping $f \mapsto \tilde f^{(\omega)}$ has an interpretation connected to the Koopman operator. For systems satisfying conditions in this paper, the Koopman group consists of linear, bounded composition operators $U_t : \mathcal F \to \mathcal F$, defined as
\begin{align*}
  U_t f \triangleq f \circ \phi_t,
\end{align*}
for time $t \in \mathcal T$. The space $\mathcal F$ is a space of observables, in literature typically chosen as $L^\infty(\mathcal M, \mu)$, where $\mu$ is a $\phi_t$-invariant measure. The Koopman operator is the generator of the Lie group; when time is discrete, i.e.,  $\mathcal T \subset \Z$, the generator is often denoted simply by $U$.

The harmonic averages $\tilde f^{(\omega)}$ are eigenfunctions of the Koopman operator, since it is simple to show that \[ U_t \tilde f^{(\omega)} = e^{i2\pi \omega t} \tilde f^{(\omega)}.\] It follows that, for any observable $f$, the harmonic average $\tilde f^{(\omega)}$ is an eigenfunction of the Koopman group generator at eigenvalue $\lambda_\omega = e^{i2\pi\omega}$ for discrete time, and at eigenvalue $\lambda_\omega = {i2\pi\omega}$ for continuous time. The operator $P_\omega : \mathcal F \to \mathcal F$,
\[
P_\omega f = \tilde f^{(\omega)}
\]
is a projection operator, projecting the space of observables onto the  eigenspace of the Koopman operator at $\lambda_\omega$. In this context, the harmonic quotient map $\pi_\omega$, applied to the Fourier basis, evaluates the functions spanning the eigenspace at $\lambda_\omega$. Therefore, by endowing the harmonic quotient with a particular geometry, we are able to computationally analyze the eigenspaces of the Koopman operator, without ever computing, or approximating, the Koopman operator itself.

\section{Discussion}
\label{sec:discussion}

It is difficult to give a level-field comparison of different methods for identification of ``coherent structures'' in state spaces due to the lack of a method-independent, universal definition of what a coherent structure might be. While each of the methods, namely, LCS, Ulam's approximation, and ergodic quotient, has an operational understanding of a coherent structure, ultimately the experience and application is the only judge of how well a method captures what is identified as ``coherent'' in practical settings. For this reason, we decided to apply the analysis to the ABC flow; this was not because we viewed the ABC flow as a particularly challenging problem, but rather because it was well analyzed both theoretically and computationally, resulting in well-established knowledge about what coherent structures one should obtain. In this sense, we found that the analysis of ergodic quotient identified the same structures as the theoretical analysis and the alternative computational methods.

When it comes to the amount of work, computational or experimental, needed to compute an approximation to invariant sets, one has to asses the setting in which either of the methods is applied. An advantage of the ergodic quotient method is that it requires a single initial condition to be placed in any ergodic set sought to be identified. In practical terms, this means that simulations or experiments need to be initialized only on a small subset of the entire state space: this was clearly exemplified in Section \ref{sec:hills-vortex} where simulations were initialized on the $\theta = 0$ planes and the method successfully identified coherent structures in the full $(R,z,\theta)$ state space. By contrast, methods based on Ulam's approximation require seeding the initial conditions in the entire invariant domain. While this might not be a significant distinction when working with ODEs, in experimental settings, e.g., when analyzing fluid flows using Particle Tracking Velocimetry, it might be possible to initialize the trajectories only in a specific, small region, and count on the long experimental runs to collect data about the rest of the state space. Such data is a natural input to ergodic quotient methods, whereas Ulam-type computations would require additional processing, e.g., flow interpolation, that might introduce further errors.

The number of initial conditions required to approximate the ergodic quotient is fairly low: only $N=1000$ trajectories were used to retrieve a very detailed image ($S = 200^3$ spatial cells) of the flow structures in Section \ref{sec:abcflow}. This is a consequence of the recognition that the entire trajectory can be described to a high precision by a relatively small set of values, i.e., averages of a function basis. While the entire trajectory evolution is used for \emph{visualization} of the structures, the \emph{identification} of structures, where the algorithmic effort is really spent, is performed only with $N$ data-points (samples of the ergodic quotient) as inputs. 

In terms of numerical linear algebra, we do acknowledge that although the Ulam's matrix is large, $S \times S$, where $S$ is the total number of spatial cells, it is often sparse. In contrast, the eigenproblem at the core of the Diffusion Maps is is potentially dense, although on a much smaller $N \times N$ matrix, where $N$ is the number of trajectories.\footnote{Due to the rapidly-decaying exponential diffusion kernel, however, the diffusion eigenproblem might also be sparsified by truncating the kernel. We have not explored the consequences of sparsification, though.} Nevertheless, we see the distinction as conceptual, rather than computational, signifying that the complexity of the state space is often captured by a far smaller number of parameters than needed to describe transport between pairs of cells in state space which is especially true for (near-) integrable, high-dimensional systems. This paper focuses on visualization as the first step in validation of the ergodic quotient approach; however, we expect that the ergodic quotient could be analyzed directly, for example, to detect topological changes such as those between embedding $\chi_1$ vs.\ $\chi_2$ in Figures \ref{fig:hill-lopert-eq} and \ref{fig:hill-hipert-eq}. These changes indicate a bifurcation of the flow, without ever visualizing the state space. If our expectations are confirmed, the high-dimensional state spaces, which cannot be visualized and requiring a large number of cells to discretize the state space, could potentially be analyzed through the study of a small number of trajectories by noting topological changes in the ergodic quotient.

Both the Ulam's approximation and the ergodic quotient construction feature exponential growth of problem size with the increase of the state space dimension $D$. Number of state space discretization cells, for a fixed axial resolution $R$, grows as $R^D$. Similarly, to approximate the ergodic quotient, if we truncate the function basis at a uniform cutoff wavenumber $K$, the number of required Fourier observables is $K^D$. For certain high dimensional systems, it is not always necessary to analyze the state space directly: rather a particular, low-dimensional output space might suffice. For example, in networks of power generators the variable of interest in detecting instabilities might be the mean frequency across all generators (see \cite{Susuki:2011jz}). If instead of the state space we study observables only on the output space, the number of required observables can be significantly lower. It is therefore possible that it is sufficient to initialize the trajectories on a low-dimensional set in the state space and analyze it in a low-dimensional output space, despite the dynamics being high-dimensional. Both order reductions are a natural fit with the presented technique in analysis of trajectory averages presented here.

The application of the ergodic quotient techniques to periodically-forced systems has been presented in Section \ref{sec:time-varying}. The extension of the method to the case of transient systems, where long-time averages lose the information about the initial evolution, is the focus of our future studies. The case of transient systems is related to the study of finite-time systems, i.e., those defined only on a fixed time interval, which includes all data sets where experiment time cannot be extended into the steady regime, e.g., field samples of geostrophic flows. We do not intend delve deep into a discussion about how much we can predict about the future of an aperiodic flow based only on a finite time interval. Instead, we point out that the analysis presented in this paper does have a very clear interpretation even on a fixed finite-time interval: the finite-time quotient would reflect sets of initial conditions that remain together under advection by the flow map over the interval considered. In that case, the $H^{-s}$ metric and diffusion maps result in a multi-scale aggregation of finite-time trajectories, identifying regions of space where material is transported in a homogeneous fashion. The resulting ``coherent structures'' present a template of the transport of material over the analyzed time interval. It is left for future research to study the relationship between transport templates corresponding to  overlapping or adjoining time intervals and their invariance and predictive power.

Recently, the analysis of mesohyperbolicity and mesoellipticity has yielded success in the study of finite-time flows. Both the ergodic quotient and mesohyperbolicity depend on averages of observables along trajectory lines. The first distinction between these methods is in their purpose: mesohyperbolicity and mesoellipticity are proposed generalizations of concepts of hyperbolicity and ellipticity from autonomous, steady-state flows to unsteady, finite-time flows. In contrast, ergodic quotient is directly related to the ergodic partition, which studies invariant sets. The two concepts are potentially related, as is the case in classical theory of autonomous systems, where the relationship between analysis of invariant manifolds and analysis of Lyapunov coefficients is standard. 

The second, more technical, distinction is in the objects averaged along trajectories. In construction of the ergodic quotient, the ultimate goal is to include all observables in the averaging process, in an attempt to represent the underlying empirical measures as well as it is possible. The observables chosen depend only on the topology of the state space, not on the actual flow map of the dynamical system. In studies of mesohyperbolicity/ellipticity, a very particular set of observables is chosen: they are entries in the Jacobian matrix of the flow. Therefore, while the  methods use the same technique of computing averages of observables, the theory behind the averages serves different purposes, and, ultimately, demonstrates the versatility of information that can be extracted from trajectory averages of observables.

From the standpoint of the direct application of methods presented in this paper, there are still points where one could improve the presented procedure, although most of such efforts fall outside the scope of dynamical systems, e.g., into fields of machine learning or computational geometry. For example, setting the bandwidth of the exponential kernel in the Diffusion Maps algorithm (see \ref{sec:diffusionmaps}), although relatively simple to perform manually, is still not fully automatic, which might slow down the analysis process when dealing with a particularly complex ergodic quotient. 


Furthermore, we were guided mostly by simplicity and clarity in our choice of the $k$-means algorithm for post-processing of diffusion coordinate, in an effort not to derail the main point of this paper. The Figure \ref{fig:hill-hipert-eq} reflected the imperfections in the choice of $k$-means clustering as the final step in post-processing. One could expect that the four clusters identified would be the three ``branches'' and the point at which they merge. While the numerical results come close to such classification of points, there is room for improvement. It is possible that instead of identifying clusters, identification of connected components or one-dimensional sets, based on a proximity graph in a diffusion coordinate space, would yield even better results than those presented here.  

\section{Conclusion}
\label{sec:conclusion}

Our results demonstrate that, by analyzing the ergodic quotient, we can infer how trajectories organize into coherent structures in the state space. To construct an approximation to the ergodic quotient, we average a subset of a function basis on the state space, and endow it with the empirical distance. In this setting, the geometry of the ergodic quotient reveals dynamically coherent regions in the state space. The geometry was studied using diffusion coordinates which rectify the ergodic quotient, enabling clear visualization and extraction of clusters corresponding to coherent structures.

To evaluate the method, we identified coherent structures for the ABC flow, an area\hyp{}preserving, three\hyp{}dimensional, steady flow. Our results correspond to those coherent structures identified by Lagrangian Coherent Structures \cite{Haller:2001vh} and Ulam's method \cite{Froyland:2009ti}. Unlike those approaches, however, our method does not require placing the initial conditions in the entire state space, but rather depends on long-time integration to explore the state space. For the same reason, it requires a relatively small number of initial conditions. This makes it suitable for analyzing experimentally harvested trajectories in experiments where initial conditions can be placed in a small region of the state space, but the dynamics of the system evolves on a larger domain. 

For autonomous flows, points on the ergodic quotient represent ergodic invariant sets in the state space. When the flow is periodically-dependent on time, the construction we present identifies invariant sets of the Poincar\'e map. In contrast to analysis of Poincar\'e map, however, coarse graining of invariant sets takes into the account the entire evolution of the trajectory, not only their Poincar\'e section, providing a more accurate method of identifying trajectories which have similar behavior. On an example of a periodically-forced 3D Hill's vortex flow we show that the method is able to identify previously undescribed coherent features in state spaces. Finally, we close the discussion of time-dependent features by generalizing the ergodic quotient to harmonic quotients, which can provide information about periodic and quasi-periodic transport in the state space.

Although this paper discusses applications of the ergodic quotient to analysis, we believe that the proposed framework could, in future, be useful in design. Diffusion coordinates provide an orthogonal system of invariant functions for the system. They are, therefore, particularly suitable as a setting for design of invariants of motion, e.g., through optimization of linear combinations of diffusion coordinates based on a design criterion.

\section{Acknowledgments}
  Presented research was funded by the following grants: DARPA FA9550-07-C-0024, AFOSR FA9550-09-1-141, and ONR MURI N00014-11-1-0087.

  The first author is grateful to Ryan Mohr, whose useful comments
  improved this paper. The authors additionally thank the anonymous reviewers for their suggestions.

  The code for analysis of maps and ODEs was implemented in
  Python programming language, using Scipy
  \cite{jones01:scipy}, PyDSTool \cite{clewley04:pydstool}
  (automatic generation of Radau numerical integrators), and PyTables
  libraries \cite{alted02:pytables} (HDF5 data storage). 

\appendix

\section{Convergence of averages}
\label{sec:convergence}

Our implementation of the averaging process computes averages of observables along with the numerical evolution of the trajectory, i.e., ``on-line''. A heuristic criterion uses the on-line finite averages to terminate the trajectory integration once a desired tolerance on convergence is achieved. As a result, different initial conditions will result in different lengths of trajectories, depending on the regularity of trajectory that evolves from an initial condition.

The ODE model of a system is numerically integrated producing state vectors $\phi_{t_n}(x)$ at adaptively-determined time instances $t_n$. Summing a  zeroth-order interpolation of evolution of $f_k$ approximates the continuous-time average along the trajectory:
\begin{align*}
  \frac{1}{t_N - t_0}&\int_{t_0}^{t_N} \left[f \circ \phi_t \right](x) dt \approx \notag\\
  \frac{1}{t_N - t_0}&\sum_{n=1}^N (t_n - t_{n-1}) [f \circ \phi_{t_{n-1}}](x)
\end{align*}

To determine the stopping time of the integration, we pause the integrator in intervals of fixed length $T_e$ and at every time instance $t_0 + j T_e$ compare the vector of time averages with the vector obtained at the previous pause $t_0 + (j-1)T_e$:
\begin{align*}
  \text{ADIFF}_j(x) &= \norm{ \tilde F_j(x) - \tilde F_{j-1}(x) }_\infty
\end{align*}
where $\tilde F_j(x)$ indicates the vector containing all the chosen observables averaged along trajectory starting at $x$, computed in the $[t_0,t_0 + jT_e]$ interval. When $\text{ADIFF}_j(x) < \text{ATOL}$ for some pre-specified small tolerance $\text{ATOL}$, the integration is stopped, and $F_j(x)$ taken to be the value of time\hyp{}averaged observables at point $x$. The underlying assumption is that the finite-time averages will show little variation as the infinite limit is approached. As an alternative to $\norm{.}_\infty$, the $H^{-s}$ distance \eqref{eq:negsobdist} could have been used in the stopping criterion, however, we have not explored that option here.

Theory gives some guidance on times required for convergence of finite-time averages: over regular trajectories, averages converge with $T^{-1}$, while over mixing trajectories, the slope is gentler, $T^{-1/2}$. A classical result asserts that for other types of trajectories the slope $T^{-\alpha}$ can be arbitrarily small \cite[][\S 3.2B]{Petersen:1989ly}. A computational study of convergence with various slopes is given in \cite{Levnajic:2010gq}. From a practical standpoint, the outlook is not so bleak: slopes of small $\alpha$ are to be typically expected to occur along trajectories that pass close to homoclinic and heteroclinic orbits, and get entrained around periodic orbits that exist in their vicinity (intermittency, or stickiness) \cite{Perry:1994wl}. Regions of intermittency are typically small, e.g., for a pendulum perturbed by a small periodic forcing of magnitude $\epsilon$, the area of intermittent regions scales as $\epsilon \log (1/\epsilon)$ \cite{Treschev:1998uq}.

In Figure \ref{fig:convergence} we present several metrics which illustrate how well the time averages converge at the time the simulation is stopped by the $\text{ADIFF}(x) < \text{ATOL}$ criterion. Time average vectors $\hat F_\infty(x)$ and $\hat F_{\text{ATOL}}(x)$ both represent finite-time averages, computed for trajectories originating at initial conditions $x$. The difference between them is in the moment of termination of the averaging process. The vector $\hat F_{\text{ATOL}}(x)$ was obtained by stopping the averaging at time $T(x)$, dependent on the initial condition, when the described stopping criterion reached $\text{ATOL}=10^{-5}$. The vector $\hat F_{\infty}(x)$ mimics the ``true'' infinity, where averaging was terminated at the initial\hyp{}condition\hyp{}independent time $T_{\infty} = 2.5 \times 10^4$. 
We compared these finite-time vectors in the $L^\infty$ norm $\norm{.}_\infty$ (Figure \ref{fig:conv_errorinf_log}) and in the $H^{-s}$ norm $\norm{.}_{2,-s}$ \eqref{eq:negsobdist} (Figure \ref{fig:conv_erroremp_log}), as the latter is the relevant error for the remainder of the computation. 

Generated images demonstrate that the described termination criterion results in stopping times that vary by an order of magnitude (Figure \ref{fig:conv_time}). As a result, the errors in convergence are consistently low, for regular, chaotic and intermittent trajectories. If we compare the magnitude of errors in $H^{-s}$- and $L^\infty$-norms, it is clear that the use of $H^{-s}$ norm, which discounts higher wavenumbers, has a desired side-effect of increasing the robustness of the averaging to finite stopping times.

\begin{figure*}[htpb]
  \centering
  \subfloat[Simulation termination time $T(x)$]{
    \includegraphics[height=45mm]{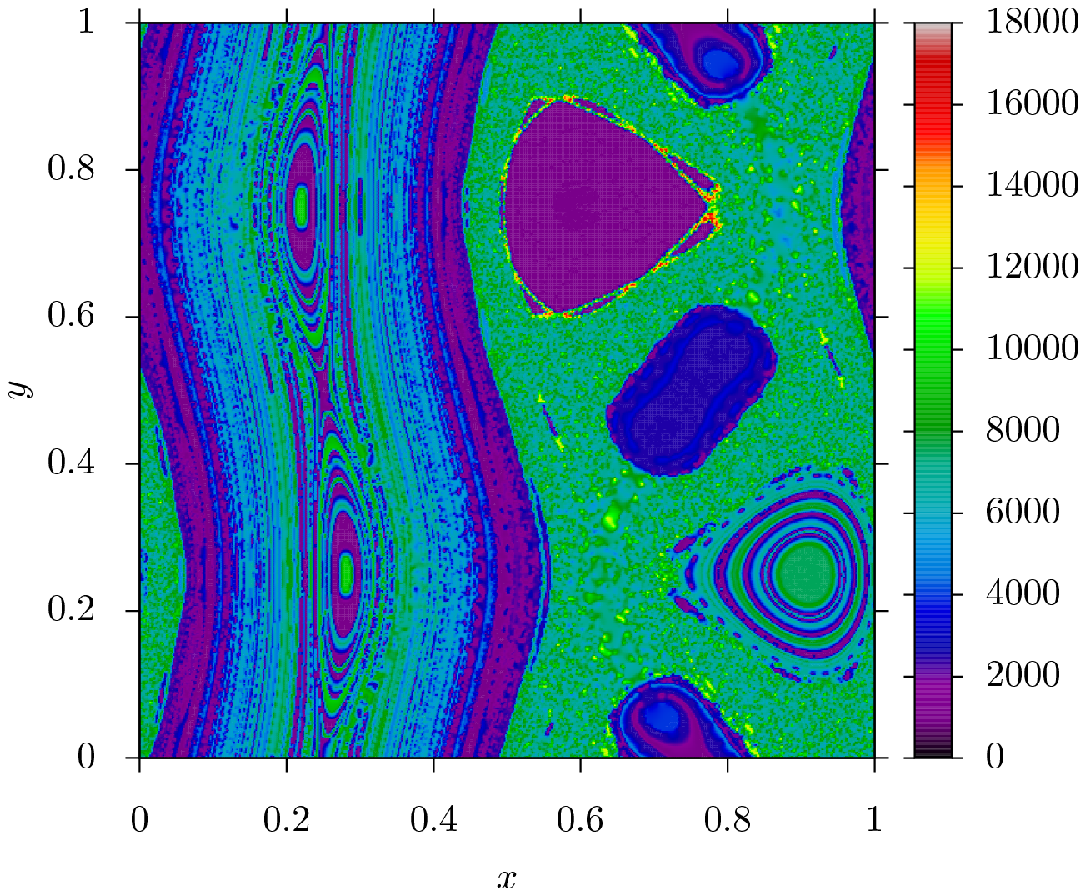}
    \label{fig:conv_time}
  }\hfill
  \subfloat[Time average error in infinity norm: $\log_{10}\norm{\hat F_{\text{ATOL}}(x) - \hat F_\infty(x)}_\infty$]{
    \includegraphics[height=45mm]{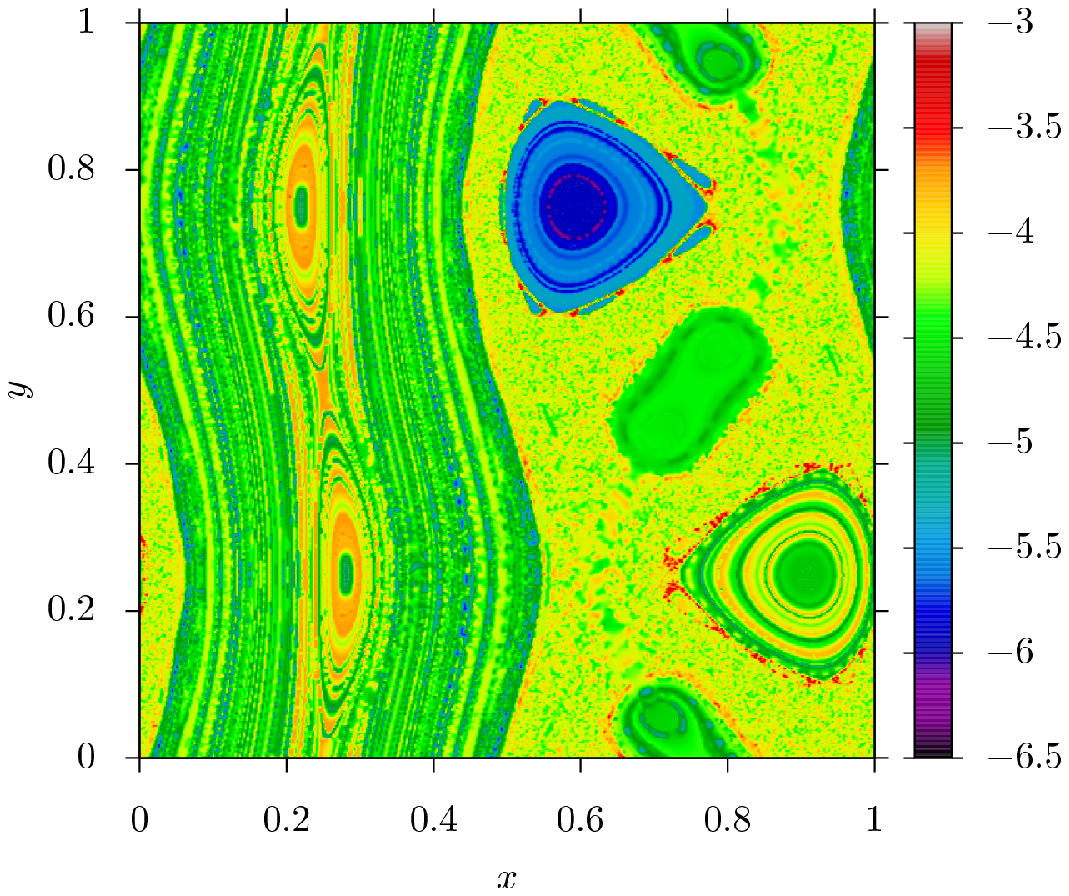}
    \label{fig:conv_errorinf_log}
  }\hfill
  \subfloat[Time average error in $H^{-s}$ norm: $\log_{10} \norm{\hat F_{\text{ATOL}}(x) - \hat F_\infty(x)}_{2,-s}$]{
    \includegraphics[height=45mm]{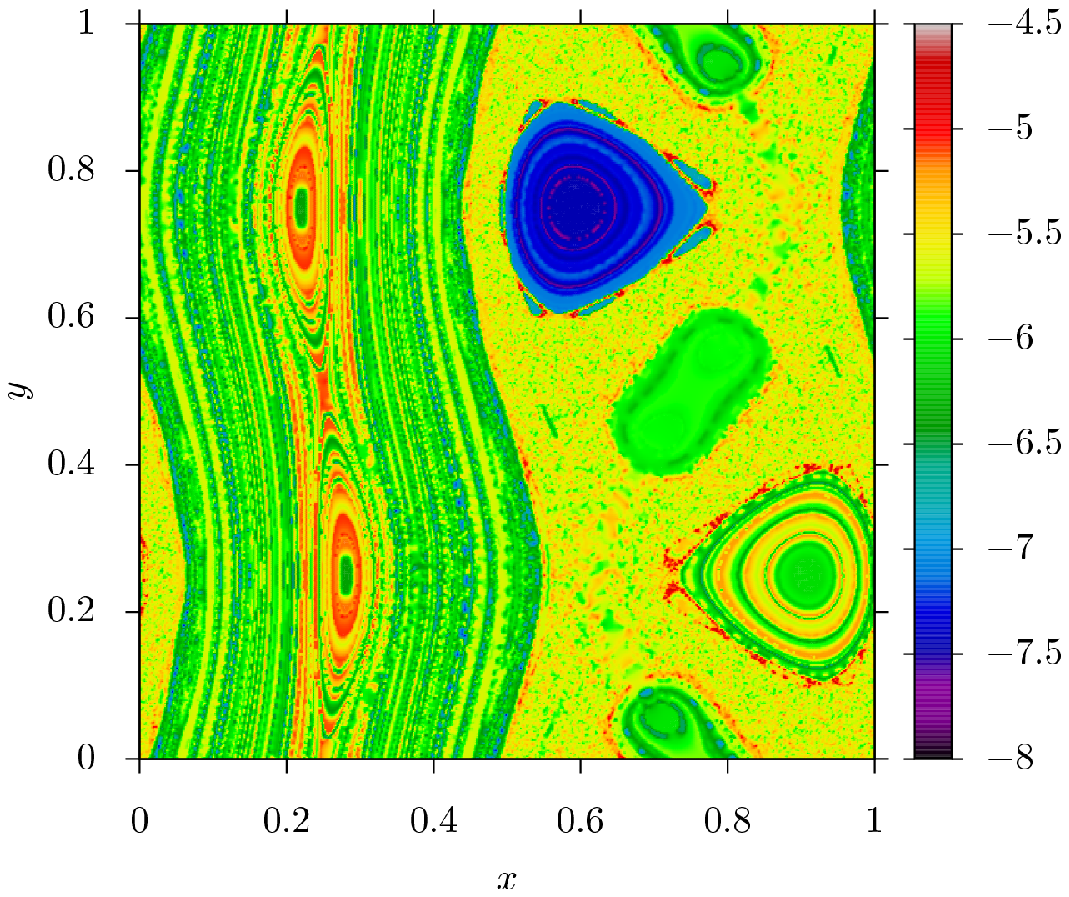}
    \label{fig:conv_erroremp_log}
  }
  \caption{Convergence of averages for $N=500$ trajectories of ABC flow starting at plane $z=0$. Simulations were terminated at tolerance $\text{ATOL}=1 \times 10^{-5}$. 
Figures show: 
\ref{fig:conv_time} the termination time $T(x)$, 
\ref{fig:conv_errorinf_log},  
\ref{fig:conv_erroremp_log} 
    the difference between time averages at time $T(x)$ and at ``true'' infinity  $T_\infty=2.5 \times 10^4$, in infinity and $H^{-s}$ norms, respectively. Observables were Fourier harmonics \eqref{eq:fourierfunction} with  wavevectors $k \in [-3,3]^3$, and simulation extension time was $T_e=500$.}
\label{fig:convergence}
\end{figure*}

Even though the proposed criterion is only one of many criteria which could be used for stopping, we believe that, based on the results presented, it provides a good heuristic that consistently produced low errors in time averages, while keeping the simulation times reasonable.

\section{A finite approximation to empirical metric}
\label{sec:finitewavevector}

A finite approximation to the norm on the Sobolev space $H^{-s}$ \eqref{eq:negsobdist} is computed by truncating the infinite sum over wavevectors $k \in \Z^D$ to a box $k \in [-K,K]^D$. In this appendix we show that such approximation converges with rate $\sqrt{K}$ with the rate constant depending on the dimension of the system.

\begin{prop}[Error in finite-wavenumber approximation to $H^{-s}$ norm]
  Let the error in approximation $\epsilon_K$ be given by a weighted sum of coefficients, $c_k = \abs{\tilde f_k(x) - \tilde f_k(y)}$, summed outside the $[-K,K]^D$ box:
  \begin{equation*}
    \epsilon_K^2 = \sum_{k \not \in [-K,K]^D} \frac{ c_k^2 }{ \left[ 1 + \left(2\pi \norm{k}_2 \right)^2\right]^s },
  \end{equation*}
  where $s = (D+1)/2$, and $0 \leq c_k \leq \mathcal C(D)$ are differences in averages of Fourier observables scaled to unit $L_2$ norm.
  Then 
  \begin{align*}
      \epsilon_K < \frac{ \mathcal E(D) }{ \sqrt{K} }, \quad \text{with} \quad
      \mathcal E(D) < \left(\frac{2}{\pi}\right)^{3/4} \frac{ D^{1/4} }{\pi^{D}},
  \end{align*}
  where $\mathcal{E}(D)$ is the error decay rate constant.
\end{prop}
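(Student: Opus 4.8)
The plan is to reduce the claim to an elementary tail estimate for a radially decreasing weight summed over the lattice $\Z^D$. First I would pull the uniform bound $c_k \le \mathcal C(D)$ out of the sum, so that
\[
\epsilon_K^2 \le \mathcal C(D)^2 \sum_{k \not\in [-K,K]^D} \frac{1}{\left[1+(2\pi\norm{k}_2)^2\right]^{(D+1)/2}},
\]
and then simplify the weight twice. Dropping the additive $1$ gives the pointwise bound $[1+(2\pi\norm{k}_2)^2]^{-(D+1)/2}\le (2\pi)^{-(D+1)}\norm{k}_2^{-(D+1)}$ --- this is where the prefactor $(2\pi)^{-(D+1)}$, hence the powers of $\pi$ that survive in $\mathcal E(D)$, comes from. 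Moreover every $k$ outside the cube has $\norm{k}_\infty > K$, hence $\norm{k}_2 \ge \norm{k}_\infty > K$. So the entire problem becomes: bound $\sum_{\norm{k}_\infty > K}\norm{k}_2^{-(D+1)}$ by a quantity of order $1/K$ with an explicitly controlled $D$-dependent constant.

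Second, I would compare this lattice sum with a radial integral. Assigning to each $k$ the half-open unit cube $Q_k = k + [-\tfrac12,\tfrac12)^D$, the cubes are pairwise disjoint, and $\norm{k}_\infty > K$ forces $Q_k \subset \{x : \norm{x}_\infty > K\} \subseteq \{x : \norm{x}_2 > K\}$. Since $r\mapsto r^{-(D+1)}$ is decreasing and $\norm{k}_2 \ge \norm{x}_2 - \sqrt D/2$ for $x\in Q_k$,
\[
\sum_{\norm{k}_\infty > K}\norm{k}_2^{-(D+1)} \le \int_{\norm{x}_2 > K}\left(\norm{x}_2 - \tfrac{\sqrt D}{2}\right)^{-(D+1)} dx .
\]
Passing to spherical coordinates, with $\omega_{D-1} = 2\pi^{D/2}/\Gamma(D/2)$ the surface measure of the unit sphere in $\R^D$, and substituting $u = r - \sqrt D/2$, the right side equals $\omega_{D-1}\int_{K-\sqrt D/2}^{\infty} u^{-(D+1)}(u+\sqrt D/2)^{D-1}\,du$. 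Using $(u+\sqrt D/2)^{D-1}\le u^{D-1}\exp\!\left(\tfrac{(D-1)\sqrt D}{2(K-\sqrt D/2)}\right)$ on the range of integration, this is at most $C\,\omega_{D-1}/K$ with $C=C(D,K)$ a factor tending to $1$ as $K\to\infty$ (so the rate in $K$ is exactly $1/K$, with a $D$-dependent constant, as the statement anticipates).

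Finally I would assemble the pieces. Taking square roots gives $\epsilon_K \le \mathcal E(D)/\sqrt K$ with $\mathcal E(D) = \mathcal C(D)\,(2\pi)^{-(D+1)/2}\,(C\,\omega_{D-1})^{1/2}$. Substituting the explicit value $\mathcal C(D) = 2\norm{f_k}_\infty = 2(2\pi)^{-D/2}$ coming from the $L^2$-normalized Fourier basis \eqref{eq:fourierfunction} (a difference of two averages of an observable bounded by $\norm{f_k}_\infty$) and $\omega_{D-1} = 2\pi^{D/2}/\Gamma(D/2)$, a short computation shows the target inequality $\mathcal E(D) < (2/\pi)^{3/4}D^{1/4}/\pi^D$ is equivalent to
\[
C\cdot 2^{1/2-2D}\,\pi^{(D+1)/2} < D^{1/2}\,\Gamma(D/2).
\]
For $D\ge 2$ this holds with an enormous margin by a Stirling lower bound on $\Gamma(D/2)$ --- the left side decays like $(\sqrt\pi/4)^{D}$ while the right side grows super-exponentially, so even a crude $C$ of moderate size suffices --- and the single case $D=1$ I would check by hand, where $\sum_{|k|>K}k^{-2} < 2/K = \omega_0/K$ directly, with no cube-shift needed.

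The main obstacle is the lattice-to-continuum step: the diameter $\sqrt D$ of a unit cube grows with the dimension, so naively replacing $\norm{k}_2$ by $\norm{x}_2$ would cost a factor exponential in $D$; one must integrate over the slightly shrunk region $\{\norm{x}_2 > K - \sqrt D/2\}$ and keep the correction factor $(u+\sqrt D/2)^{D-1}/u^{D-1}$ under control on the relevant range (which is why the constant is stated for the asymptotic regime in $K$). Everything after that --- evaluating the radial integral, inserting $\omega_{D-1}$, and comparing with the target via Stirling --- is routine, and the numerical slack for $D\ge 2$ is generous.
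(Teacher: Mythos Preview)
Your approach is genuinely different from the paper's, and the difference matters. The paper never passes to a continuous integral; instead it replaces $\norm{k}_2$ by $\norm{k}_\infty$ in the weight (using $\norm{k}_\infty \le \norm{k}_2$) and then sums over $\ell^\infty$-shells $\{\norm{k}_\infty = \kappa\}$, $\kappa > K$. The number of lattice points on each shell is exactly $(2\kappa+1)^D-(2\kappa-1)^D$, which the paper bounds by $D\binom{D}{\lfloor D/2\rfloor}(2\kappa)^{D-1}$; with $s=(D+1)/2$ this collapses to a constant times $\sum_{\kappa>K}\kappa^{-2}<1/K$, and the central binomial is then bounded by $\sqrt{8/\pi}\,2^D/\sqrt{D+1}$ to reach the stated $\mathcal E(D)$. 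The point is that this shell count is \emph{exact}, so the resulting bound is uniform in $K\ge 1$ with no correction factor at all.

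Your integral comparison is where the gap sits. You correctly flag it as the ``main obstacle'', but it is more than an obstacle: the factor $C(D,K)=\exp\!\big((D-1)\sqrt D/\big(2(K-\sqrt D/2)\big)\big)$ is not bounded uniformly in $K$ --- it is not even finite for $K\le \sqrt D/2$ --- so what you obtain is an asymptotic rate $\epsilon_K\sim \mathcal E(D)/\sqrt K$ rather than the uniform inequality $\epsilon_K<\mathcal E(D)/\sqrt K$ that the proposition asserts. Your Stirling step at the end then compares the target against a constant you have not actually secured for all $K$. The cleanest repair is exactly the paper's: trade $\norm{k}_2$ for $\norm{k}_\infty$ before summing, which removes the $\sqrt D/2$ shift entirely and makes the lattice count combinatorial rather than geometric. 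A minor secondary point: you take $\mathcal C(D)=2(2\pi)^{-D/2}$ (the honest triangle-inequality bound on a difference of averages), whereas the paper uses $\mathcal C(D)=(2\pi)^{-D/2}$; your value is the safer one, but it does cost an extra factor of $2$ that has to be absorbed in the final comparison.
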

\begin{proof}
Norms on $\Z^D$ are ordered in scale,  $\norm{k}_\infty < \norm{k}_2$ on $\Z^D$, enabling us to bound $\epsilon_K$ by summing over $\infty$-spheres in $\Z^D$:
\begin{align*}
    \epsilon_K^2 &= \sum_{\kappa > K}\sum_{\norm{k}_\infty = \kappa} \frac{ c_k^2 }{ \left[ 1 + \left(2\pi \norm{k}_2 \right)^2\right]^s } \\
    &\leq  \mathcal{C}(D)^2 \sum_{\kappa > K}^\infty \frac{ \sigma_\kappa - \sigma_{\kappa - 1}}{ \left[ 1 + \left(2\pi \kappa \right)\right]^s },
\end{align*}
where $\sigma_\kappa \triangleq \#\{ k \in \Z^D : \norm{k}_\infty \leq \kappa\}$ count the number of wavevectors in each $\infty$-ball. To bound $\sigma_\kappa$, we expand the polynomials in $\kappa$:
\begin{align*}
  \sigma_\kappa  - \sigma_{\kappa-1}   & = (2\kappa+1)^D - (2\kappa-1)^D \\
  &=  \sum_{d=0}^{D-1} \binom{D}{d} (2\kappa)^d [1 - (-1)^{D-d}] \\
  &\leq  D \binom{D}{\floor{D/2}} (2\kappa)^{D-1},
\end{align*}
as there is less than $D/2$ nonzero coefficients, all bounded by the largest coefficient $2\binom{D}{\floor{D/2}}$.

Choosing order  $s = (D+1)/2$, the error is bounded above by tails of sums of inverse squares
\begin{align*}
  \epsilon_K^2 &\leq \mathcal{C}(D)^2 D \binom{D}{\floor{D/2}} \sum_{\kappa > K}^\infty \frac{(2\kappa)^{D-1}}{
    \left[ 2\pi \kappa \right]^{D+1} } \\
  &\leq D \binom{D}{\floor{D/2}} \frac{\mathcal{C}(D)^2}{4\pi^{D+1}} \sum_{\kappa > K}^\infty \kappa^{-2}.
\end{align*}
As $\sum_{\kappa>K}^\infty \kappa^{-2} < \int_K^\infty u^{-2} = K^{-1}$, it follows that $\epsilon_K \in \mathcal O(K^{-1/2})$.

To bound the decay constant $\mathcal{G(D)}$, we use an upper bound for the  binomial coefficient, computed in \cite{Stanica:2001wy}:
\begin{align*}
  \binom{m n}{p n} < \frac{1}{\sqrt{2\pi n }} 
  \frac{m^{mn+1/2}}{(m-p)^{(m-p)n+1/2}p^{pn + 1/2}},
\end{align*}
for $m,n,p\in \N$ and $m>p$. Setting $n = \floor{D/2}+1$, $m=2$, $p=1$, provides a conservative estimate:
\begin{align*}
  \binom{D}{\floor{D/2}} < \sqrt{\frac{8}{\pi}}\frac{2^D}{\sqrt{D+1}}.
\end{align*}
As observables are $L_2$-normalized Fourier functions  \eqref{eq:fourierfunction}, $\mathcal{C}(D) = (2\pi)^{-D/2}$, allowing us to compute a bound on $\mathcal E^2(D)$:
\begin{align*}
  \mathcal E^2(D) &= D \binom{D}{\floor{D/2}} \frac{\mathcal{C}(D)^2}{4\pi^{D+1}} 
  < \sqrt{\frac{8}{\pi^3}} \frac{\sqrt{D}}{\pi^{2D}},
\shortintertext{resulting in}
\mathcal E(D) &< \left(\frac{2}{\pi}\right)^{3/4} \frac{ D^{1/4} }{\pi^{D}}.
\end{align*}
\end{proof}

\section{Diffusion Maps}
\label{sec:diffusionmaps}

The Diffusion Maps is an algorithm that retrieves an orthogonal coordinate system, the diffusion coordinates, for a set of points in a metric space. If the set of points is taken from a differentiable submanifold in the original space, the retrieved coordinates are discretized eigenfunctions of a Laplace-Beltrami operator on the submanifold. Diffusion coordinates yield a parsimonious description of the data set, i.e., even a truncated diffusion coordinate set provides a good low\hyp{}dimensional model of the original data set.

The Diffusion Maps is a part of a broader class of laplacian manifold\hyp{}learning algorithms, which rely on spectral computations to obtain information about geometry of manifolds. The algorithm was first described by Lafon and Coifman in \cite{Coifman:2005bk,Coifman:2006cy} with more detailed discussions on numerical implementation given in \cite{Lafon:2006bo,Lee:2010cz}.
To convey the intuition about the algorithm, we provide a short account of the general algorithm and implementation details we found relevant. At the end, we discuss how we applied it to the ergodic quotient.

The ambient space is the space $\C^N$ with a distance function $\mathcal D$. We aim to obtain a representation of a subset $\mathcal Y \subset \C^N$, with a distribution $dP(y) = p(y)dy$  supported on $\mathcal Y$. Averaging operators $A_h$ are defined through the kernel $k_h$:
\begin{align}
  A_h \phi(y) &= \frac{\int_{\C^N} k_h(y,z) \phi(z) dP(z)}{\int_{\C^N} k_h(y,z) dP(z)}\\
 k_h(y,z) &= (4\pi)^{-d/2} \exp\frac{-\mathcal{D}^2(y,z)}{4h}
\end{align}
and they represent diffusion of points on $\mathcal Y$, while the diffusion time-scale $h$ specifies the width of the gaussian kernel.\footnote{Sometimes $1/h$ is referred to as the \emph{bandwidth}.} The $h \to 0$ limit of the flow $A_h^{t/h}$ has a simple structure  when $\mathcal Y$ has a well-defined differential structure: it is generated by a Fokker-Planck-type operator $G$:
\begin{align*}
  \lim_{h \to 0} A_h^{t/h} &= e^{-t G}\\
  G &= \Delta - \frac{\nabla p}{p}, \notag
\end{align*}
where diffusion evolves over time period $t$. The Diffusion Maps algorithm computes values of eigenfunctions $\chi^{(k)}(y)$ of $e^{-tG}$ at a finite number of points $y_i$ that are sampled from $\mathcal Y$ according to the (sampling) density $p(y)$.

Eigenfunctions of the Laplace-Beltrami operator convey the geometry of the set $\mathcal Y$. If understanding geometry of $\mathcal Y$ is the goal, as in the case of ergodic quotient, it is necessary to remove the bias in $G$ resulting from non-uniformity in density $p$. The bias can be removed by replacing the heat kernel $k_h$ by its rescaled version:
\begin{align*}
\hat k_h &= \frac{ k_h(y,z) }{ \hat p(y) \hat p(z)} \\
\hat p(y) &= \int k_h(y,z)dz, \notag
\end{align*}
where $\hat p$ acts as an estimate for the sampling density.
In this case, the flow in the $h\to 0$ limit represents the pure Laplace-Beltrami diffusion on $\mathcal Y$, i.e., $G = \Delta$.

To compute the diffusion coordinates, defined as the embedding of points $y_i$ into eigenvalue-scaled eigenfunctions of the diffusion operator  $y \to (\lambda_1 \chi^{(1)}(y), \lambda_2 \chi^{(2)}(y), \dots)$, Diffusion Maps treats the finite set of sampling points $y_i \in \C^N$ as vertices of a fully-connected graph in $\mathcal Y$, while weights of edges are given by the ambient distance $\mathcal D$ on $\C^D$, between pairs of points $y_i$. The diffusion process on the graph can be represented by a Markov chain, whose eigenvectors $\chi^{(k)}$ are discretizations of eigenfunctions of the diffusion on $\mathcal Y$. In other words, we construct a discrete embedding $y_i \to (\lambda_1 \chi^{(1)}_i, \lambda_2 \chi^{(2)}_i, \dots, \lambda_N \chi^{(N)}_i)$, assuming $\norm{\chi^{(k)}}_\infty = 1$.

From the standpoint of the implementation, Diffusion Maps takes a matrix of pairwise distances between points $y_i$ as the input, and computes diffusion coordinates $\lambda \chi$ following these steps:
\begin{subequations}
  \begin{align}
    \text{$\mathcal D$ matrix}& & (d^2_{ij}) &= \mathcal D^2(y_i,y_j),\\
    \text{Heat kernel}& & A_{ij} &= \exp \left[
      -\frac{d^2_{ij}}{4h} \right],\label{eq:heatkernel}\\
    \text{Est. sampling density}& & \hat p_i &= \sum_{j=1}^N A_{ij},\\
    \text{Unbiased heat kernel}& & \hat A_{ij} &=
    \frac{A_{ij}}{p_i p_j},\\
    \text{State transition}& & S_{ij} &= \frac{\hat
      A_{ij}}{\sum_{j=1}^N \hat A_{ij}},\\
    \text{Solve the eigenproblem}& & S \chi &= \lambda
    \chi, \label{eq:eprob}
  \end{align}
\end{subequations}
with $i,j \in 1,2,\dots,N$ indexing elements of matrices and vectors, and $\mathcal{D}_{-s,K}$ representing summation \eqref{eq:negsobdist} truncated to $k\in [-K,K]^D$.

The time-scale of the heat kernel $h$ in \eqref{eq:heatkernel} is a parameter that determines the strength of diffusion along graph that samples $\mathcal Y$. It influences the information about topology of $\mathcal Y$ that we infer from discrete data:  too small of a choice of $h$ will introduce artificial discontinuities, while too large of a choice will artificially connect disconnected components of $\mathcal Y$. Paper \cite[][\S 5.3]{Lee:2010cz} describes several heuristics for determining $h$ from the data set; we chose the \emph{Neighborhood Size Stability} approach. It computes the minimal time-scale $h$ for which every graph vertex has $N_{min}$ neighbors within the characteristic diffusion distance  $\sqrt{2 h}$, as measured by the $H^{-s}$ distance. Number $N_{min}$ is chosen by the user, depending on number of discrete samples analyzed.

Since $S$ is not symmetric, complex values appear as partial results in numerical solutions of its eigenproblem, which is a source of further numerical errors. To avoid such issues, it is common to first symmetrize the matrix, using a spectrum-preserving symmetrization:
\begin{equation*}
\hat S_{ij} = \frac{\hat A_{ij}}
{\sqrt{\sum_{k=1}^N \hat A_{ik}} \sqrt{\sum_{k=1}^N \hat A_{kj}} }.
\end{equation*}
Solving the eigenproblem for $\hat S$, we obtain eigenvectors $\hat \chi^{(k)}$ from which we recover the eigenvectors that sample the diffusion eigenfunctions \eqref{eq:eprob} by rescaling $\hat \chi^{(k)}$ by the zeroth eigenvector $\chi^{(k)}_i = \hat \chi^{(k)}_i / \hat \chi^{(0)}_i$. Vectors $\chi^{(k)}$ are point-wise evaluations of diffusion eigenfunctions $\chi^{(k)}(y)$ at $y_i$, i.e., $\chi^{(k)}_i = \chi^{(k)}(y_i)$.

In application of Diffusion Maps to the ergodic quotient, initial conditions $x_i \in \mathcal M$ and associated trajectories get mapped to the points $y_i$ in the space $\C^N$ of averaged observables, with dimension $N$ given by the number of chosen observables. The ambient distance $\mathcal D$ is the $H^{-s}$ distance, while the set $\mathcal Y$ is the ergodic quotient $\xi$. The sampling density $p(y)$ is determined both by the distribution of initial conditions $x_i$ of trajectories on the state space, which can be controlled, and the distribution of the ergodic sets in the state space, which is a priori unknown and cannot be controlled. As a consequence, $p(y)$ is rarely uniform and the rescaling of the diffusion kernel is a necessary step in the algorithm to obtain the intrinsic coordinate set with the correct interpretation of the euclidean distance. 

\bibliography{manuscript}
\bibliographystyle{model1b-num-names}

\end{document}